\documentclass[11pt]{article}  

\RequirePackage{amssymb,amsfonts,amsmath}
\usepackage{amsthm}
\RequirePackage{bm,bbm}
\RequirePackage{epsfig,graphicx}
\RequirePackage{comment}
\usepackage{lscape}
\usepackage{mathtools}
\usepackage{float}
\usepackage{subfigure}
\usepackage[export]{adjustbox}
\usepackage{scalerel}
\usepackage{geometry}
\usepackage{hyperref}
\usepackage{xcolor} 
\hypersetup{
        colorlinks,
    linkcolor=black,
    citecolor={blue!50!black},
    urlcolor={blue!80!black},
    urlcolor={black}
}

\geometry{left=3.5cm,right=3.5cm,top=3cm,bottom=3cm}

\DeclarePairedDelimiter{\ceil}{\lceil}{\rceil}

\usepackage{latexsym}
\usepackage{enumerate}
\usepackage{epsfig}
\usepackage{graphicx}
\usepackage{color}
\usepackage{float}
\usepackage{stmaryrd}
\usepackage{mathrsfs}
\usepackage{makeidx}
\usepackage{fancyhdr}
\usepackage{lastpage}
\usepackage{url}
\usepackage{fullpage}
\usepackage{parskip}
\usepackage{booktabs}
\usepackage{multirow}
\usepackage{comment}
\usepackage{mathtools}

\newtheorem{theorem}{Theorem}

\newtheorem{assumption}{Assumption}


\def \hX  {{\widehat{X}}}
\def \hY  {{\widehat{Y}}}
\def \hx  {{\widehat{x}}}
\def \hy  {{\widehat{y}}}
\def \hz  {{\widehat{z}}}

\def \EE  {{\mathbb{E}}}
\def \PP  {{\mathbb{P}}}
\def \QQ  {{\mathbb{Q}}}
\def \RR  {{\mathbb{R}}}

\def \D   {{\rm d}}

\def \Xo {X^{(0)}}
\def \Xte {X^{(\theta)}}

\def \Rts {R^{(\sigma)}}

\def \Yo {Y^{(0)}}
\def \Yte {Y^{(\theta)}}
\def \Yts {Y^{(\sigma)}}

\def \yo {y^{(0)}}
\def \yte {y^{(\theta)}}

\def \zts {z^{(\sigma)}}

\def \e   {{\rm e}}  

\begin{document}
\pagenumbering{gobble} 
\title{\LARGE {\bf 
Importance Sampling for Pathwise Sensitivity\\ of Stochastic Chaotic Systems}}
\author{Wei Fang\footnote{Mathematical Institute, University of Oxford, Andrew Wiles Building, Woodstock Road, Oxford, OX2 6GG, UK, E-mail: wei.fang@maths.ox.ac.uk, mike.giles@maths.ox.ac.uk}
\ \,{\small and} \  Michael B. Giles\footnotemark[1]   
 }

\date{}
\maketitle
\setcounter{secnumdepth}{3}
\setcounter{tocdepth}{2}
\pagenumbering{arabic} 

\begin{abstract}
This paper proposes a new pathwise sensitivity estimator for chaotic SDEs. By introducing a spring term between the original and perturbated SDEs, we derive a new estimator by importance sampling. The variance of the new estimator increases only linearly in time $T,$ compared with the exponential increase of the standard pathwise estimator. We compare our estimator with the Malliavin estimator and extend both of them to the Multilevel Monte Carlo method, which further improves the computational efficiency. Finally, we also consider using this estimator for the SDE with small volatility to approximate the sensitivities of the invariant measure of chaotic ODEs. Furthermore, Richardson-Romberg extrapolation on the volatility parameter gives a more accurate and efficient estimator.  Numerical experiments support our analysis.
\end{abstract}

\noindent
{\bf Key words:} sensitivity,
chaotic system,
importance sampling, MLMC, Richardson-Romberg 

\vspace{1em}

\noindent
{\bf AMS subject classifications: } 60H10,
60H35,
65C30

\section{Introduction}
In real-world applications, such as aerodynamic shape optimization, statistics and weather forecast, expectations with respect to the invariant measure of chaotic ordinary differential equations (ODEs) and stochastic differential equations (SDEs) are of great interest. The sensitivities of the quantity with respect to the parameters of the chaotic system is also helpful for calibrations and further considerations. However, the computations of the sensitivities for both ODEs and SDEs are difficult due to the chaotic property. A small perturbation of the coefficient results in a large divergence of the two solutions. Lea, Allen \& Haine \cite{LAH00} use the Lorenz equation (ODE) as an example to illustrate the failure of the pathwise sensitivity method due to the blow-up of the variation process. A similar problem exists for the stochastic Lorenz equation. The variation process may become not ergodic and blow up exponentially, even though the chaotic system itself is ergodic, see section \ref{Numerical results} for detailed numerical results. In this paper, we consider first how to calculate these sensitivities for chaotic SDEs efficiently and then discuss how to use this to approximate the sensitivities of ODEs.

Consider an $m$-dimensional chaotic SDE driven by a $m$-dimensional Brownian motion with parameter $\theta:$
\begin{equation}
\D X_t^{(\theta)} \ = \ f(\theta;X_t^{(\theta)})\, \D t\ + \sigma\,\D W_t,
\label{theta SDE}
\end{equation}
which has a fixed initial data $\xi_0$ and a locally Lipschitz drift $f:\, \mathbb{R}^m\rightarrow \mathbb{R}^m$ such that the strong solution exists uniquely and the SDE is geometric ergodic:
\begin{equation*}
\left|\EE\left[\varphi(X_T^{(\theta)})-\pi^{(\theta)}(\varphi)\right]\right|\leq \mu\,\e^{-\lambda T}
\end{equation*}
for some constant $\mu,\lambda>0,$ where $\varphi$ is a differentiable function with polynomial growth and $\pi(\varphi)$ is the expectation of $\varphi$ with respect to the invariant measure of the SDE. For the computation of $\pi^{(\theta)}(\varphi),$ one common approach is to truncate the infinite time interval into a sufficiently large finite time $T$ and estimate 
\[
F(\theta) \ = \ \EE\left[\varphi(X_T^{(\theta)})\right].
\]
For the computation of the sensitivity of $\pi^{(\theta)}(\varphi)$ with respect to $\theta,$ Assaraf et al. \cite{AJLR18} showed
\[
\frac{\partial\, \pi^{(\theta)}(\varphi)}{\partial \theta} = \frac{\partial}{\partial\theta}\left( \lim_{T\rightarrow \infty} \EE\left[\varphi(X_T^{(\theta)})\right] \right) 
= \lim_{T\rightarrow \infty}  \left( \frac{\partial}{\partial\theta}\EE\left[\varphi(X_T^{(\theta)})\right] \right)
\]
under some restrictive conditions. The exchangeability of the limit and the differentiation for general ergodic SDEs is unknown and in this paper, we assume that this holds and the convergence speed is still exponentially fast:
\begin{equation}
\left| \frac{\partial}{\partial\theta}\EE\left[\varphi(X_T^{(\theta)})\right]  - \frac{\partial\, \pi^{(\theta)}(\varphi)}{\partial \theta}  \right|\leq \mu^*\,\e^{-\lambda^*T}
\label{ConvergenceRateInvariant}
\end{equation}
for some constant $\mu^*,\lambda^*>0.$ Therefore we approximate the sensitivity of the $\pi^{(\theta)}(\varphi)$ by $\partial F/\partial\theta$ with a sufficiently large $T.$

There are several main approaches to computing sensitivities of SDEs \cite{GP13}: finite difference (FD), likelihood ratio method (LR \cite{GP87}), pathwise sensitivity calculation (PS), the weak derivative method (WD \cite{PG88}) and Malliavin calculus (MA \cite{FLLLT99}). The FD estimator is straightforward to implement. For example, we can use the central difference $(F(\theta+\varepsilon/2)-F(\theta-\varepsilon/2))/\varepsilon$ to approximate the first-order derivative. Using the same Brownian paths for both $F(\theta+\varepsilon/2)$ and $F(\theta-\varepsilon/2))$ greatly reduces the variance, but the estimator is biased. Both LR and PS estimators are unbiased. The difference is the choice of the sample space $\Omega$ to do the differentiation \cite{LP90}. By choosing the discrete numerical solution paths as $\omega,$ differentiating their log joint distribution gives the LR estimator. However, the variance of LR estimator in most cases is $O(h^{-1})$ as $h\rightarrow 0$ where $h$ is the timestep size of the numerical approximation. Again, the tradeoff between bias and variance is needed. The PS, by choosing the independent $U(0,1)$ variates as the sample space, is relatively more popular due to its low variance and applicability of adjoint method which improves the efficiency when calculating many sensitivities \cite{GG06}. It requires differentiating the original SDE \eqref{theta SDE} to obtain the variation process. In our case, the plain PS estimator suffers the problem that the variation process blows up exponentially. Similar to LR, the WD method differentiates the probability measures but it may require resimulations which increases the computational cost greatly. Lastly, Malliavin calculus estimators can be viewed as a combination of the LR and PS methods \cite{CG07}, but is less popular in practice due to the heavy machinery of Malliavin calculus. However, we will show later that in our case, the Malliavin calculus provides a comparable estimator.

Another issue for the computation of sensitivities is the extension to function $\varphi$ with discontinuity, which is out of our scope in this paper but may provide possible future research directions. LR, WD and MA all cope with this difficulty but FD and PS suffer. Giles \cite{GB09} proposed the vibrato Monte Carlo method, which can be viewed as a combination of PS and LR for the final timestep. Chan \& Joshi \cite{CJ13} proposed partial proxy schemes by approximately shifting the means of the standard normal variables that govern the system. Tong \& Liu \cite{TL16} derived a new estimator for discontinuous $\varphi$ and used importance sampling to make all the samples fall into the same set to avoid the discontinuity.

In this paper, we first derive the Malliavin estimator for the chaotic SDEs following the idea in \cite{FLLLT99}, since it expresses the sensitivity by the process itself and avoids the variation process. However, it only works for the sensitivity of the drift parameter and fails for others since the estimators again involve the variation process. The other drawback is that it is difficult to apply the adjoint method. Therefore, we propose a new PS estimator with importance sampling. Similar to the idea of introducing a spring term between fine and coarse paths in \cite{Part3}, we add a spring term between $X_t^{(\theta)}$ and $X_t^{(\theta+\varepsilon)}$ and derive the new PS estimator with the Radon-Nikodym derivative and take the differentiation limit $\varepsilon\rightarrow 0.$ The benefit of this change is that the variation process with spring term becomes ergodic again, which allows the computation of sensitivities of the volatility parameter and initial condition.
The variances of both Malliavin and the new PS estimators increase only linearly in time $T$ which is a great improvement compared with the exponential increase of the plain PS estimator. Next, we apply multilevel Monte Carlo method \cite{giles08,giles15} to improve the efficiency further. Since the estimator involves the original ergodic SDE which is not contractive, we need to employ the change of measure technique in \cite{Part3} to add another spring term between the fine and coarse paths for level estimators. Then, we can use the same MLMC scheme with the same random samples and Radon-Nikodym derivatives to calculate the original value $F(\theta)$ and its derivative simultaneously. Finally, we consider the approximation of the sensitivities of chaotic ODEs by the stochastic version with small volatility $\sigma.$ 

The rest of the paper is organised as follows.  Section 2 reviews the pathwise sensitivity method and explains the problem of the plain PS estimator. Section 3 derives the Malliavin estimator and the new pathwise sensitivity method with importance sampling is introduced in Section 4. Numerical results are shown in section 5. The applications of MLMC schemes are presented in section 6. Section 7 has some conclusions and discusses future extensions. 

In this paper we consider the infinite time interval $[0,\infty)$ and let 
$(\Omega,\mathcal{F},\PP)$ be a probability space with 
normal filtration $(\mathcal{F}_t)_{t\in[0,\infty)}$ corresponding to
a $m$-dimensional standard Brownian motion 
$W_t=(W^{(1)},W^{(2)},\ldots,W^{(m)})_t.$
We denote the vector norm by 
$\|v\|\triangleq(|v_1|^2+|v_2|^2+\ldots+|v_m|^2)^{\frac{1}{2}}$, 
the inner product of vectors $v$ and $w$ by 
$\langle v,w \rangle\triangleq v_1w_1+v_2w_2+\ldots+v_mw_m$, 
for any $v,w\in\RR^m$ and the Frobenius matrix norm by 
$\|A\|\triangleq \sqrt{\sum_{i,j}A_{i,j}^2}$ 
for any $A\in\RR^{m\times d}.$

To focus on the computation of the sensitivity, we assume the existence and strong convergence of a suitable numerical scheme.
\begin{assumption}[Numerical Solution]
\label{assp:Numerical solution}
For the ergodic SDE \eqref{theta SDE}, we assume that there exists a numerical solution $\hX_t$, whose moments are uniformly bounded with respect to time $T,$ that is, for any $p\geq 1,\ T>0,$
\begin{equation}
\label{uniform moments}
\sup_{0\leq t\leq T}\EE\left[ \|\hX_t\|^p \right] \leq C_p,
\end{equation}
and the strong error is also uniformly bounded
\begin{equation}
\label{uniform error}
\sup_{0\leq t\leq T}\EE\left[ \|\hX_t-X_t\|^p \right] \leq c_p\, h^p,
\end{equation}
where $C_p$ and $c_p$ are constants independent of $T,$ and $h$ is the average timestep size.
\end{assumption}
For simplicity, for rest of the paper, we denote the discrete points of a numerical solution by $\hX_{t_n}$ for $n=0,1,...,N,$ and $t_{n+1}=t_{n}+h_n$ which is suitable for both uniform timestep and adaptive schemes. For the numerical experiments on the stochastic Lorenz equation, we use the adaptive Euler-Maruyama method proposed in \cite{Part1,Part2}, in which the uniform bounds for moments and the strong error are achieved for a class of ergodic SDEs. 

\section{Pathwise Sensitivity}
Pathwise sensitivity is a standard approach also known as Infinitesimal Perturbation Analysis (IPA) \cite{LP90}.
Assuming the exchangeability of the integral and the differentiation (Theorem 1 in \cite{LP90} provides a practical and general sufficient validity condition for the exchange of derivative and integral. Fortunately, these conditions are satisfied in most cases.), by Leibniz integral rule and chain rule, we obtain
\[
\frac{\partial F(\theta)}{\partial \theta} \ = \ \EE\left[\frac{\partial\varphi(X_T^{(\theta)})}{\partial \theta}\right]
= \ \EE\left[\left\langle \nabla \varphi(X_T^{(\theta)}),\ \frac{\partial X_T^{(\theta)}}{\partial \theta} \right\rangle\right].
\] 
For $x_T^{(\theta)}\triangleq\frac{\partial X_T^{(\theta)}}{\partial \theta},$ again assuming the exchangeability of the integral and the differentiation (guaranteed by Theorem 39 in chapter 7 in \cite{PP05}), by Leibniz integral rule, we have the variation process
\begin{equation}
\label{Standard Adjoint}
\D\,x_t^{(\theta)} = \left(\frac{\partial f(\theta;X_t^{(\theta)}) }{\partial \theta} + \frac{\partial f(\theta;X_t^{(\theta)})}{\partial X_t^{(\theta)}} x_t^{(\theta)}\right)\, \D t.
\end{equation}
Numerically, we can simulate $X_T^{(\theta)}$ and $x_T^{(\theta)}$ simultaneously:
\begin{eqnarray*}
\hX_{t_{n+1}}^{(\theta)}  &=& \hX_{t_n}^{(\theta)} +\ f(\theta;\hX_{t_n}^{(\theta)})\, h_n\ + \sigma\,\Delta W_n, \\
\hx_{t_{n+1}}^{(\theta)} &=& \hx_{t_n}^{(\theta)} + \left( \frac{\partial f(\theta;\hX_{t_n}^{(\theta)}) }{\partial \theta} + \frac{\partial f(\theta;\hX_{t_n}^{(\theta)})}{\partial \hX_{t_n}^{(\theta)}}\, \hx_{t_n}^{(\theta)}\right)\, h_n.
\end{eqnarray*}
with initial condition $\hX_{0}^{(\theta)}=\xi_0$ and $\hx_0 = 0.$ We then have the standard PS estimator:
\begin{equation}
 \frac{\partial \widehat{F}}{\partial \theta}\ =\ \EE\left[ \left\langle \nabla\varphi(\hX_T^{(\theta)}),\ \hx_T^{(\theta)} \right\rangle\right],
\end{equation}
which is the unbiased estimator of the sensitivity of $\EE\left[\varphi(\hX_T^{(\theta)})\right]$ with respect to $\theta.$

The original process $X_t^{(\theta)}$ is ergodic and under suitable conditions, the moments of the numerical solution $X_{t_n}^{(\theta)}$ are uniformly bounded. However, for the variation process $x_t^{\theta}$ of the chaotic system, it may be no longer ergodic and the moments may increase exponentially in $T$. As a result, we have the following theorem.
\vspace{1em}
\begin{theorem}[Standard Pathwise]
\label{Theorem: standard pathwise}
If the moments of the original variation process $x_t$ increases exponentially with respect to time $t,$ then for any numerical solution $\hx_t$ with first order strong convergence, the variance of the standard PS estimator $ \frac{\partial \widehat{F}}{\partial \theta}$ also increases exponentially, that is
\begin{equation}
\label{standard PS variance}
\mathbb{V} \left[ \frac{\partial
 \widehat{F}}{\partial \theta}\right] \leq   \eta\ \e^{\kappa'T},
\end{equation}
for some constant $\eta,\kappa'>0.$ The constant $\kappa'$ is the Lyapunov exponent of the variation process. 
\end{theorem}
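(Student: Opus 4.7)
The plan is to bound the variance by the second moment and split the resulting expectation into a factor involving $\nabla\varphi(\hX_T^{(\theta)})$ (which is controllable via Assumption 1) and a factor involving $\hx_T^{(\theta)}$ (where the exponential growth will enter from the hypothesis on $x_t$). Concretely, I would start from
\[
\mathbb{V}\!\left[\frac{\partial \widehat{F}}{\partial \theta}\right]
\ \leq\ \EE\!\left[\big\langle \nabla\varphi(\hX_T^{(\theta)}),\,\hx_T^{(\theta)}\big\rangle^{2}\right]
\ \leq\ \EE\!\left[\,\|\nabla\varphi(\hX_T^{(\theta)})\|^{2}\,\|\hx_T^{(\theta)}\|^{2}\,\right],
\]
by the Cauchy–Schwarz inequality applied to the inner product, and then apply Cauchy–Schwarz again at the level of the expectation to obtain the product $\EE[\|\nabla\varphi(\hX_T^{(\theta)})\|^{4}]^{1/2}\,\EE[\|\hx_T^{(\theta)}\|^{4}]^{1/2}$.

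Next I would dispatch the two factors. The first factor is uniformly bounded in $T$: since $\varphi$ is assumed to have polynomial growth, so does $\nabla\varphi$, and Assumption \ref{assp:Numerical solution} delivers uniform-in-$T$ moment bounds for $\hX_T^{(\theta)}$, giving $\EE[\|\nabla\varphi(\hX_T^{(\theta)})\|^{4}]\leq C$ independent of $T$. The second factor is where the exponential blow-up comes from: by Minkowski's inequality and the first-order strong convergence $\hx_t \to x_t$,
\[
\EE[\|\hx_T^{(\theta)}\|^{4}]^{1/4}\ \leq\ \EE[\|x_T^{(\theta)}\|^{4}]^{1/4}\ +\ \EE[\|\hx_T^{(\theta)}-x_T^{(\theta)}\|^{4}]^{1/4}
\ \leq\ \EE[\|x_T^{(\theta)}\|^{4}]^{1/4}\ +\ c\, h,
\]
and the hypothesised exponential growth of moments of $x_t$ gives $\EE[\|x_T^{(\theta)}\|^{4}]^{1/2}\leq \eta\,\e^{\kappa' T}$, with $\kappa'$ equal to (twice) the Lyapunov exponent governing the moment growth of $x_t$.

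The main obstacle is the strong-convergence step: the statement $\EE[\|\hx_T^{(\theta)}-x_T^{(\theta)}\|^{p}]\leq c_p h^p$ is only useful if the constant $c_p$ does not grow \emph{faster} than exponentially in $T$. I would justify this via a standard Grönwall argument applied to the forward error equation for $\hx_t - x_t$, using the Lipschitz properties of $\partial f/\partial X$ along the trajectory together with the uniform moment bounds on $\hX_t$ from Assumption \ref{assp:Numerical solution}; this yields a constant $c_p$ growing at most as $\e^{CT}$, which either reinforces $\kappa'$ or is dominated by it, and can be absorbed into $\eta$ and $\kappa'$. A secondary bookkeeping point is to verify that the exponent obtained after the two applications of Cauchy–Schwarz (which costs us a factor of two in the exponent) is still precisely the Lyapunov exponent of the variation process; this is consistent with how $\kappa'$ is defined in the statement, so no sharpening is needed.
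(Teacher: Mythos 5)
Your proposal is correct and follows essentially the same route as the paper: bound the variance by the second moment, split via Cauchy--Schwarz/H\"older into $\EE[\|\nabla\varphi(\hX_T^{(\theta)})\|^4]^{1/2}\,\EE[\|\hx_T^{(\theta)}\|^4]^{1/2}$, control the first factor uniformly in $T$ by Assumption \ref{assp:Numerical solution} and polynomial growth, and transfer the exponential moment growth from $x_T^{(\theta)}$ to $\hx_T^{(\theta)}$ via strong convergence (the paper uses the elementary bound $\EE[\|\hx_T^{(\theta)}\|^4]\leq 8\,\EE[\|x_T^{(\theta)}\|^4]+\lambda$ where you use Minkowski, which is an immaterial difference). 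Your additional Gr\"onwall remark about the possible $T$-dependence of the strong-error constant is a sensible precaution the paper glosses over, and your absorption of it into $\eta$ and $\kappa'$ is consistent with the stated conclusion.
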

\begin{proof}
H{\"o}lder's  inequality gives
\[
\mathbb{V} \left[ \frac{\partial
 \widehat{F}}{\partial \theta}\right] \leq \EE\left[ \left|\frac{\partial
 \widehat{F}}{\partial \theta} \right|^2\right] \leq
 \EE\left[ \left\|\nabla\varphi(\hX_T^{(\theta)}) \right\|^4\right]^{1/2} \EE\left[ \left\| \hx_T^{(\theta)} \right\|^4\right]^{1/2}
\]
The first expectation on the right hand side is bounded uniformly in $T$ due to the Assumption \ref{assp:Numerical solution} and polynomial growth of $\varphi.$ For the second one, the strong convergence of $\hx_t$ implies
\[
\EE\left[ \left\| \hx_T^{(\theta)} \right\|^4\right]
\leq 8\,\EE\left[ \left\| x_T^{(\theta)} \right\|^4 \right] + \lambda,
\]
for some constant $\lambda>0.$ Then the exponential increase of the $\EE\left[ \left\| x_T^{(\theta)} \right\|^4 \right]$ gives the final result.
\end{proof}

If we consider the sensitivity of the outputs related to the invariant measure, we need to choose $T \sim \frac{1}{\lambda^*}\left|\log\varepsilon\right|$ due to \eqref{ConvergenceRateInvariant} to bound the truncation error, which together with Theorem \ref{Theorem: standard pathwise} gives
\[
\mathbb{V} \left[ \frac{\partial
 \widehat{F}}{\partial \theta}\right] \sim   \eta\ \varepsilon^{-\kappa'/\lambda^*}.\]  
Therefore, in order to achieve the $\varepsilon^2$ MSE, we need to simulate $O(\varepsilon^{-2-\kappa'/\lambda^*})$ paths and for each path, we need $h_n=O(\varepsilon)$ to bound the weak error and $T=O(\left| \log \varepsilon\right|).$ The total computational cost of the standard Monte Carlo method becomes $O(\varepsilon^{-3-\kappa'/\lambda^*}|\log\varepsilon|).$
 
\section{Malliavin estimator}
Following the idea in \cite{FLLLT99}, we derive the sensitivity estimator using Malliavin calculus. (The derivation is similar to the proof in Proposition 3.1 in \cite{FLLLT99}, we include it here for clarity and simplicity of this case and it is also similar to our approach to derive the new PS estimator.) Without loss of generality, we assume we are calculating the sensitivity at $\theta=0.$ Under measure $\mathbb{P},$ the original SDE with $\theta=0:$
\begin{equation}
\label{theta 0}
\D X^{(0)}_t = f(X_t^{(0)}) \,\D t + \sigma \,\D W_t^{\mathbb{P}}
\end{equation}
and the perturbed SDE :
\[
\D \Xte_t = \left[ f(\Xte_t) + \theta\, \gamma(\Xte_t)\right] \,\D t + \sigma \,\D W_t^{\mathbb{P}},
\]
where $\gamma:\ \mathbb{R}^m\rightarrow \mathbb{R}^m$ is the perturbation term.
We then consider the expectations
\[F(0) = \EE^{\mathbb{P}}\left[\varphi(X_T^{(0)})\right],\ \ F(\theta)=\EE^{\mathbb{P}}\left[\varphi(\Xte_T)\right],\]
and the derivative by taking the limit
\[
\left.\frac{\partial F(\theta)}{\partial \theta}\right|_{\theta=0}=\lim_{\theta\rightarrow 0}\frac{F(\theta)-F(0)}{\theta}.
\]
Considering a new measure $\mathbb{Q}$ with 
\[
\D W_t^{\mathbb{Q}} = \theta\, \gamma(\Xte_t)/\sigma\,\D t + \D W_t^{\mathbb{P}},
\]
being the standard Brownian motion, we obtain
\[
\D \Xte_t = f(\Xte_t)  \,\D t + \sigma \,\D W_t^{\mathbb{Q}},
\]
under measure $\mathbb{Q}.$ Therefore, by the Girsanov theorem, we have
\[
F(\theta)=\EE^{\mathbb{Q}}\left[\varphi(\Xte_T) R_T(\theta)\right]
,\]
where $R_T(\theta)$ is the Radon-Nikodym derivative 
\[
R_t(\theta) = \frac{\D \mathbb{P}}{\D \mathbb{Q}} = \exp\left( \int_0^t \frac{\theta}{\sigma} \left\langle \gamma(\Xte_u),\,\D W_u^{\mathbb{Q}} \right\rangle -\frac{1}{2} \int_0^t \frac{\theta^2}{\sigma^2} \left\| \gamma(\Xte_u) \right\|^2\, \D u \right).
\]
Since $(\Xte_t,\,W_t^{\mathbb{Q}})$ under measure $\mathbb{Q}$ has the same joint distribution as $(X_t^{(0)},\,W_t^{\mathbb{P}})$ under measure $\mathbb{P},$ we obtain
\[
F(\theta)=\EE^{\mathbb{P}}\left[ \varphi(X_T^{(0)}) R_T(\theta)\right]
\]
with
\[
R_t(\theta) =\exp\left(\int_0^t \frac{\theta}{\sigma} \left\langle \gamma(X_u^{(0)}),\,\D W_u^{\mathbb{P}}\right\rangle -\frac{1}{2} \int_0^t \frac{\theta^2}{\sigma^2} \left\| \gamma(X_u^{(0)}) \right\|^2\, \D u \right).
\]
By the definition of the exponential martingale and It\^{o} formula, we have
\[
\frac{R_T(\theta)-1}{\theta} = \int_0^T \frac{1}{\sigma}\,R_t(\theta)  \left\langle \gamma(X_t^{(0)}),\,\D W_t^{\mathbb{P}} \right\rangle,
\]
and as $\theta \rightarrow 0,$
\[
\frac{R_T(\theta)-1}{\theta} \rightarrow \int_0^T \frac{1}{\sigma} \left\langle \gamma(X_t^{(0)}),\,\D W_t^{\mathbb{P}} \right\rangle\ \ \textbf{in}\ L^2.
\]
Finally, we take the limit $\theta\rightarrow 0$ to get
\[
\frac{ F(\theta) - F(0)}{\theta} = \EE\left[\varphi(X_T^{(0)})\frac{R_T(\theta)-1}{\theta}\right] \rightarrow \EE\left[\varphi(X_T^{(0)}) \int_0^T \left\langle \frac{\gamma(X_t^{(0)})}{\sigma},\  \D W_t^{\mathbb{P}}\right\rangle\right] ,
\]
and obtain the Malliavin estimator:
\begin{equation}
\label{Malliavin estimation}
\left.\frac{\partial F(\theta)}{\partial \theta} \right|_{\theta=0} = \EE\left[\varphi(X_T^{(0)}) \int_0^T \left\langle \frac{\gamma(X_t^{(0)})}{\sigma},\  \D W_t^{\mathbb{P}}\right\rangle\right],
\end{equation}
and numerically
\begin{equation}
\label{Malliavin estimator}
\frac{\partial \widetilde{F}}{\partial \theta} = \varphi(\hX_T) \sum_{n=0}^{N-1} \left\langle \frac{\gamma(\hX_{t_n})}{\sigma},\  \Delta W_n^{\mathbb{P}}\right\rangle.
\end{equation}
The benefit of this approach is the avoidance of the variation process and then the variance of the Malliavin estimator increases only linearly in $T,$ as shown in the following theorem.
\vspace{1em}
\begin{theorem}[Malliavin Estimator]
\label{Theorem: Malliavin Estimator}
If $\hX_t$ satisfies Assumption \ref{assp:Numerical solution} and $\|\gamma(x)\|$ increases at most polynomially when $\|x\|$ is large, the variance of the Malliavin estimator increases linearly with respect to $T,$ that is
\[
\mathbb{V} \left[ \frac{\partial
 \widetilde{F}}{\partial \theta}\right]\ \leq\ \frac{\eta'}{\sigma^2}\, T
\]
for some constant $\eta'>0.$
\end{theorem}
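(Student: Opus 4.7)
The plan is to bound the variance using a Cauchy--Schwarz split, so that the "outer" factor $\varphi(\hX_T)$ (whose moments are uniformly bounded in $T$) is separated from the martingale-like stochastic sum $M_T \triangleq \sum_{n=0}^{N-1} \langle \gamma(\hX_{t_n})/\sigma,\,\Delta W_n^{\mathbb{P}}\rangle$, and then show that $\mathbb{E}[M_T^4]$ grows at most like $T^2/\sigma^4$. Taking the square root of that and combining with the uniformly bounded factor yields the desired linear-in-$T$ bound $\eta'T/\sigma^2$.

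More concretely, I would first write
\[
\mathbb{V}\!\left[\frac{\partial\widetilde F}{\partial\theta}\right]
\ \leq\ \mathbb{E}\!\left[\varphi(\hX_T)^2\, M_T^2\right]
\ \leq\ \mathbb{E}\!\left[\varphi(\hX_T)^4\right]^{1/2}\mathbb{E}\!\left[M_T^4\right]^{1/2},
\]
and handle the first factor exactly as in the proof of Theorem~\ref{Theorem: standard pathwise}: Assumption~\ref{assp:Numerical solution} together with the polynomial growth of $\varphi$ gives $\mathbb{E}[\varphi(\hX_T)^4]\leq C$ independent of $T$.

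The core of the argument is the estimate on $\mathbb{E}[M_T^4]$. Since $\Delta W_n^{\mathbb{P}}$ is independent of $\mathcal{F}_{t_n}$ with mean zero, the partial sums $M_{t_k}=\sum_{n=0}^{k-1}\langle\gamma(\hX_{t_n})/\sigma,\,\Delta W_n^{\mathbb{P}}\rangle$ form a discrete-time martingale. Applying a discrete Burkholder--Davis--Gundy inequality yields
\[
\mathbb{E}[M_T^4]\ \leq\ C\,\mathbb{E}\!\left[\Big(\sum_{n=0}^{N-1}\frac{\|\gamma(\hX_{t_n})\|^2}{\sigma^2}\,h_n\Big)^{2}\right].
\]
Then a Cauchy--Schwarz step on the time sum, bounded by $T\sum_n \|\gamma(\hX_{t_n})\|^4 h_n /\sigma^4$, combined with the polynomial growth of $\gamma$ and the uniform-in-$T$ moment bound \eqref{uniform moments}, gives $\mathbb{E}[M_T^4]\leq C'T^2/\sigma^4$. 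Substituting back produces the claimed inequality.

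I do not expect any deep obstacle; the key quantitative point (and the reason the estimator behaves so much better than the plain PS one) is precisely that $M_T$ has orthogonal martingale increments, so its variance is additive in time and its fourth moment grows only like $T^2$, in contrast with the exponential blow-up of the variation process. The only technical care needed is in invoking a discrete-time BDG-type inequality that applies to the adapted integrand $\gamma(\hX_{t_n})/\sigma$; if one prefers to avoid BDG, the same $T^2$ bound on $\mathbb{E}[M_T^4]$ can alternatively be obtained by expanding $M_T^4$, using that cross terms with distinct indices vanish in expectation because the corresponding Brownian increments are independent and mean zero, and estimating the remaining diagonal and paired terms with Assumption~\ref{assp:Numerical solution}.
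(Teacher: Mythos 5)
Your proposal is correct and follows essentially the same route as the paper's proof: bound the variance by the second moment, apply a Cauchy--Schwarz (H\"older) split to isolate $\varphi(\hX_T)$ from the stochastic sum, control the fourth moment of the sum via a Burkholder--Davis--Gundy bound, and then use a Jensen/Cauchy--Schwarz step on the time sum together with the uniform moment bounds of Assumption \ref{assp:Numerical solution} and the polynomial growth of $\gamma$ to obtain the $T^2/\sigma^4$ estimate and hence the claimed $\eta' T/\sigma^2$ bound.
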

\begin{proof}
By H{\"o}lder's , Burkholder--Davis--Gundy and Jensen's  inequalities and the uniform moments in Assumption \ref{assp:Numerical solution}, we obtain
\begin{eqnarray*}
\mathbb{V} \left[ \frac{\partial
 \widetilde{F}}{\partial \theta}\right] \leq \EE\left[ \left|\frac{\partial
 \widetilde{F}}{\partial \theta}\right|^2\right] &\leq& \EE\left[\left| \varphi(\hX_T)\right|^4\right]^{1/2} \EE\left[\left|\sum_{n=0}^{N-1} \left\langle \frac{\gamma(\hX_{t_n})}{\sigma},\  \Delta W_n^{\mathbb{P}}\right\rangle\right|^4\right]^{1/2}\\
&\leq& \lambda'\, \EE\left[\left| \varphi(\hX_T)\right|^4\right]^{1/2} \EE\left[\left|\sum_{n=0}^{N-1}  \frac{\|\gamma(\hX_{t_n})\|^2}{\sigma^2}h_n \right|^2\right]^{1/2} \leq   \frac{\eta'}{\sigma^2}\, T,
\end{eqnarray*}
for some constant $\lambda',\,\eta'>0.$
\end{proof}

Choosing $T \sim \frac{1}{\lambda^*}\left| \log\varepsilon \right|$ implies 
\[
\mathbb{V} \left[ \frac{\partial
 \widetilde{F}}{\partial \theta}\right] \sim  \frac{\eta'}{\lambda^*\sigma^2} \,\left|\log\varepsilon\right|.\]  
Therefore, in order to achieve the $\varepsilon^2$ MSE, we need to run $O(\varepsilon^{-2} |\log \varepsilon|)$ paths and the total computational cost of the standard Monte Carlo method is $O(\varepsilon^{-3}|\log\varepsilon|^2),$ which is a great improvement over the standard PS method.
However, this benefit is limited to the sensitivity with respect to the drift parameters. For the sensitivity with respect to the volatility parameters and initial condition, the Malliavin estimators again involve the variation process so that the variance of the estimator increases exponentially. Another consideration is the application of the adjoint technique. Since we have to derive different Malliavin estimators for different parameters, we are unable to apply the adjoint technique and then the computational cost increases linearly with the number of sensitivities.

\section{Importance sampling for sensitivity}
\label{New PS section}
In this section, we consider how to use the importance sampling technique to derive the new PS estimator. The main idea is that by introducing a spring term between the original and perturbed SDEs, we obtain a new variation process which is ergodic again. Without loss of generality, we assume we are calculating the sensitivity at $\theta=0.$

Let $\Xo_t$ be the solution to equation \eqref{theta 0} when $\theta=0,$ and consider the new perturbed SDE under measure $\tilde{\mathbb{Q}},$
\[\D Y_t^{(\theta)} \ = \left( f(\theta;Y_t^{(\theta)}) + S\left(\Yo_t-\Yte_t\right) \right)\, \D t\ + \sigma\,\D W_t^{\tilde{\mathbb{Q}}},\]
with 
\[
\D W_t^{\mathbb{P}} = S \left(\Yo_t-\Yte_t\right)/\sigma \, \D t\ + \,\D W_t^{\tilde{\mathbb{Q}}},
\]
which introduces a spring term with sufficiently large $S>0$ to prevent $\left\|\Yo_t\!-\!\Yte_t\right\|$ from increasing exponentially. Note that $\Yte_t = \Xo_t$ and $ W_t^{\tilde{\mathbb{Q}}} =  W_t^{\mathbb{P}}$ when $\theta=0.$

By Girsanov theorem, we have
\[
F(\theta) \ = \ \EE^{\tilde{\mathbb{Q}}}\left[\varphi(Y_T^{(\theta)})R_T(\theta)\right],
\]
where $R_t(\theta)$ is corresponding Radon-Nikodym derivative
\[
R_t(\theta) =  \frac{\D \mathbb{P}}{\D \tilde{\mathbb{Q}}} =\exp\left(-\int_0^t \frac{S}{\sigma} \left\langle \Yo_u-\Yte_u,\,\D W_u^{\tilde{\mathbb{Q}}} \right\rangle -\frac{1}{2} \int_0^t \frac{S^2}{\sigma^2} \left\|\Yo_u-\Yte_u\right\|^2\, \D u \right).
\]
By Leibniz integral rule, we get
\[
\frac{\partial F(\theta)}{\partial \theta} \ = \ \EE^{\tilde{\mathbb{Q}}} \left[\frac{\partial\varphi(Y_T^{(\theta)})}{\partial \theta}R_T(\theta) +\varphi(Y_T^{(\theta)})\frac{\partial R_T(\theta)}{\partial \theta}\right].
\]
and the variation process $\yte_t=\partial \Yte_t/\partial \theta$ satisfies
\[
\D\,\yte_t = \left(\frac{\partial f(\theta;Y_t^{(\theta)}) }{\partial \theta} + \left(\frac{\partial f(\theta;Y_t^{(\theta)}) }{\partial Y_t^{(\theta)}} -SI \right) \yte_t \right)\, \D t,
\]
where $I$ is a $m$-dimensional identity matrix, and
\[
\frac{\partial R_t(\theta)}{\partial \theta} = R_t(\theta)\left( \int_0^t \frac{S}{\sigma} \left\langle \yte_u,\,\D W_u^{\tilde{\mathbb{Q}}} \right\rangle + \int_0^t \frac{S^2}{\sigma^2} \left\langle \Yo_u-\Yte_u, \yte_u\right\rangle\, \D u \right).
\]

Therefore, taking $\theta=0$ and again under measure $\mathbb{P},$
\begin{equation}
\label{New PS estimator}
\left.\frac{\partial F(\theta)}{\partial \theta}\right|_{\theta=0} \ = \ \EE^{\mathbb{P}}\left[ \langle \nabla \varphi(\Yo_T),\, \yo_T\rangle +\varphi(\Yo_T)\int_0^T \frac{S}{\sigma} \left\langle \yo_t,\,\D W_t^{\mathbb{P}} \right\rangle\right].
\end{equation}

Numerically, we only need to simulate $\Yo_t$ and $\yo_t$
\begin{eqnarray}
\label{Change of Measure Greek formula}
\D\, \Yo_t \ &=&   f(0;\Yo_t)   \  \D t\ + \sigma\ \D W_t^{\mathbb{P}}, \nonumber\\
\D\,\yo_t\ &=& \left(\left.\frac{\partial f(\theta;\Yo_t) }{\partial \theta}\right|_{\theta=0} + \left( \left. \frac{\partial f(\theta;Y_t^{(\theta)}) }{\partial Y_t^{(\theta)}}\right|_{\theta=0} -SI \right) \yo_t \right)\, \D t.
\end{eqnarray}
and the new PS estimator is
\begin{equation}
\frac{\partial\breve{F}}{\partial \theta} =
\left\langle \nabla \varphi(\hY^{(0)}_T),\, \hy^{(0)}_T\right\rangle +\varphi(\hY^{(0)}_T)\sum_{n=0}^{N-1} \frac{S}{\sigma} \left\langle \hy_{t_n}^{(0)},\,\Delta W_{n}^{\mathbb{P}} \right\rangle.
\end{equation}
Note that we still simulate the same original process under measure $\mathbb{P}$ with $\Yo_t=X_t$ but a new variation process $\yo_t \neq x_t$. Comparing the two variation processes \eqref{Standard Adjoint} and \eqref{Change of Measure Greek formula}, the new one has an additional linear term which makes the process become ergodic again for a  sufficiently large $S>0.$ By Assumption \ref{assp:Numerical solution}, it is possible to find a suitable numerical scheme such that the moments and strong error of $\hy_t$ are uniformly bounded.
Then, similar to the Malliavin estimator, the variance of the new estimator $\frac{\partial
 \breve{F}}{\partial \theta}$ increases linearly in $T,$ shown in the following theorem.
\vspace{1em}
\begin{theorem}[Importance Sampling]
\label{Theorem: Importance Sampling}
Suppose $\hY_t$ and $\hy_t$ satisfy the Assumption \ref{assp:Numerical solution}, the variance of the new Pathwise estimator with importance sampling increases linearly with respect to $T,$ that is
\[
\mathbb{V} \left[ \frac{\partial
 \breve{F}}{\partial \theta}\right]\ \leq \ \frac{ \eta''}{\sigma^2}\, T,\]   
for some constant $\eta''>0.$ 
\end{theorem}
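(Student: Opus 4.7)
The plan is to mimic the argument used for the Malliavin estimator in Theorem \ref{Theorem: Malliavin Estimator}, but applied to each of the two additive pieces that make up the new estimator. Write
\[
\frac{\partial \breve{F}}{\partial \theta} \ = \ A + B, \qquad
A \ \triangleq \ \left\langle \nabla \varphi(\hY^{(0)}_T),\, \hy^{(0)}_T\right\rangle,\qquad
B \ \triangleq \ \varphi(\hY^{(0)}_T)\sum_{n=0}^{N-1} \frac{S}{\sigma} \left\langle \hy^{(0)}_{t_n},\, \Delta W_n^{\mathbb{P}}\right\rangle,
\]
and bound $\mathbb{V}[A+B]\le 2\,\mathbb{E}[A^2]+2\,\mathbb{E}[B^2]$. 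The key structural observation I would exploit is that the modified variation dynamics \eqref{Change of Measure Greek formula} contains the extra dissipative term $-S I\,\hy_t$, so for $S$ large enough relative to the spectral properties of $\partial f/\partial X$ the process $\hy_t$ satisfies the uniform moment bound of Assumption \ref{assp:Numerical solution}, i.e.\ $\sup_{t\le T}\mathbb{E}[\|\hy_t^{(0)}\|^p]\le C_p$ independently of $T$. This uniform-in-$T$ moment bound is the property that ultimately replaces the exponential blow-up seen in Theorem \ref{Theorem: standard pathwise}.

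For the $A$-term I would apply Cauchy--Schwarz and then Hölder to obtain
\[
\mathbb{E}[A^2]\ \le\ \mathbb{E}\!\left[\|\nabla\varphi(\hY_T^{(0)})\|^4\right]^{1/2}\mathbb{E}\!\left[\|\hy_T^{(0)}\|^4\right]^{1/2}.
\]
The first factor is bounded uniformly in $T$ by Assumption \ref{assp:Numerical solution} together with the polynomial growth of $\nabla\varphi$; the second factor is bounded uniformly in $T$ by the same assumption applied to $\hy_t^{(0)}$. Hence $\mathbb{E}[A^2]\le C_A$ for some constant $C_A$ independent of $T$, giving an $O(1)$ contribution that is absorbed into the linear-in-$T$ bound.

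For the $B$-term I would again start with Hölder, isolating the $\varphi$-factor:
\[
\mathbb{E}[B^2]\ \le\ \mathbb{E}\!\left[|\varphi(\hY_T^{(0)})|^4\right]^{1/2}\,\mathbb{E}\!\left[\Bigl|\sum_{n=0}^{N-1}\tfrac{S}{\sigma}\langle \hy_{t_n}^{(0)},\,\Delta W_n^{\mathbb{P}}\rangle\Bigr|^4\right]^{1/2}.
\]
The second factor is a discrete martingale, so Burkholder--Davis--Gundy bounds it by a constant times $\mathbb{E}[(\sum_n \tfrac{S^2}{\sigma^2}\|\hy_{t_n}^{(0)}\|^2 h_n)^2]^{1/2}$. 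I would then apply Cauchy--Schwarz (or Jensen) in the form $(\sum_n a_n h_n)^2\le (\sum_n h_n)(\sum_n a_n^2 h_n) = T\sum_n a_n^2 h_n$, take expectations, and use the uniform fourth-moment bound on $\hy_{t_n}^{(0)}$ to conclude the double sum is $O(T^2/\sigma^4)$ and hence its square root is $O(T/\sigma^2)$. Combined with the uniform bound on $\mathbb{E}[|\varphi(\hY_T^{(0)})|^4]^{1/2}$ this yields $\mathbb{E}[B^2]\le C_B T/\sigma^2$, which dominates the $A$-contribution and gives the desired linear-in-$T$ bound with $\eta''=2(C_A+C_B)$ (absorbing the $1/\sigma^2$ prefactor as in the statement).

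The main obstacle, which the paper handles implicitly through its blanket assumption on $\hy_t$, is justifying the uniform-in-$T$ moment estimate for the modified variation process. Intuitively the spring term $-SI\hy_t$ makes \eqref{Change of Measure Greek formula} contractive and hence ergodic, but doing this rigorously requires $S$ to be chosen larger than a suitable Lyapunov-type constant for $\partial f/\partial X$ along $\hY^{(0)}$, and it requires the numerical scheme for $\hy_t$ to inherit the same contraction at the discrete level. Once that uniform moment bound is in hand, the rest of the argument is a direct transcription of the Malliavin-estimator proof.
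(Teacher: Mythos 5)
Your proposal is correct and follows essentially the same route as the paper: the paper's proof simply says ``use the same approach as for the Malliavin estimator,'' i.e.\ H\"older, Burkholder--Davis--Gundy and Jensen together with the uniform moment bounds assumed for $\hY_t$ and $\hy_t$, which is exactly your treatment of the $B$-term, while your $A$-term gives an $O(1)$ contribution absorbed into the linear bound. Your closing remark correctly identifies that the uniform-in-$T$ moments of $\hy_t$ are taken as a hypothesis (via Assumption \ref{assp:Numerical solution}) rather than proved, which is also how the paper handles it.
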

\begin{proof}
Using the same approach in the proof of Theorem \ref{Theorem: Malliavin Estimator} gives the final result.
\end{proof}

Choosing $T \sim \frac{1}{\lambda^*}|\log\varepsilon|$ implies 
\[
\mathbb{V} \left[ \frac{\partial
 \widetilde{F}}{\partial \theta}\right] \sim  \frac{\eta''}{\lambda^*\sigma} \,|\log\varepsilon|.\]  
Therefore, in order to achieve the $\varepsilon^2$ MSE, we need to run $O(\varepsilon^{-2} |\log \varepsilon|)$ paths and the total computational cost of the standard Monte Carlo method is $O(\varepsilon^{-3}|\log\varepsilon|^2),$ which is the same order as the Malliavin estimators. 

However, one of the additional benefits of this approach is that fundamentally it changes the property of the variation process and we can easily extend to the computation of other sensitivities with respect to volatility parameters and initial condition. 

The other benefit is that the pathwise sensitivities fit into the structure of the adjoint technique naturally which calculates all the sensitivities with a fixed upper bound for the computational cost (up to a factor 4 of the cost for the original SDE) \cite{GG06}.

\section{Numerical results}
\label{Numerical results}
In this section, we present some numerical results for the stochastic Lorenz equation:
\begin{equation}
\label{Lorenz example}
\begin{aligned}
\D\, X_t \ = 
\begin{pmatrix}
10(X_t^2-X_t^1)\\
X_t^1(\theta-X_t^3) -X_t^2\\
X_t^1X_t^2-\frac{8}{3}X_t^3
\end{pmatrix}
\D t + \sigma\ \D W_t^{\mathbb{P}},
\end{aligned}
\end{equation}
with initial condition $x_0 = (-2.4, -3.7, 14.98)^T$ and $\sigma=6$. We estimate the sensitivity of $\EE\left[X_T^3\right]$ with respect to $\theta$ at $\theta=28.$ 

For the standard pathwise sensitivity method, we directly derive the variation process following \eqref{Standard Adjoint}:
\begin{equation*}
\begin{aligned}
\D\, x_t =\ \left(
\begin{pmatrix}
0\\
X_t^1\\
0
\end{pmatrix}
+
\begin{pmatrix}
-10 & 10 & \\
28-X_t^3 & -1 & -X_t^1\\
X_t^2 & X_t^1 & -\frac{8}{3}
\end{pmatrix}
x_t
\right) \D t 
\end{aligned} 
\end{equation*}
and the standard PS estimator:
\begin{equation}
 \frac{\partial \widehat{F}}{\partial \theta}\ =\ \hx^3_T.
\end{equation}
Numerically, we set $T=20$ and $\sigma =6$ using the same adaptive timestep function in \cite{Part3} with $\delta=2^{-9}$ and simulate $N=4\times10^7$ paths. Then we plot the log variance of the estimator $\frac{\partial
 \widehat{F}}{\partial \theta}$ with respect to $T$ in Figure \ref{ExponStdGreek}. 
\begin{figure}[h]
\center
\includegraphics[width=0.7\textwidth]{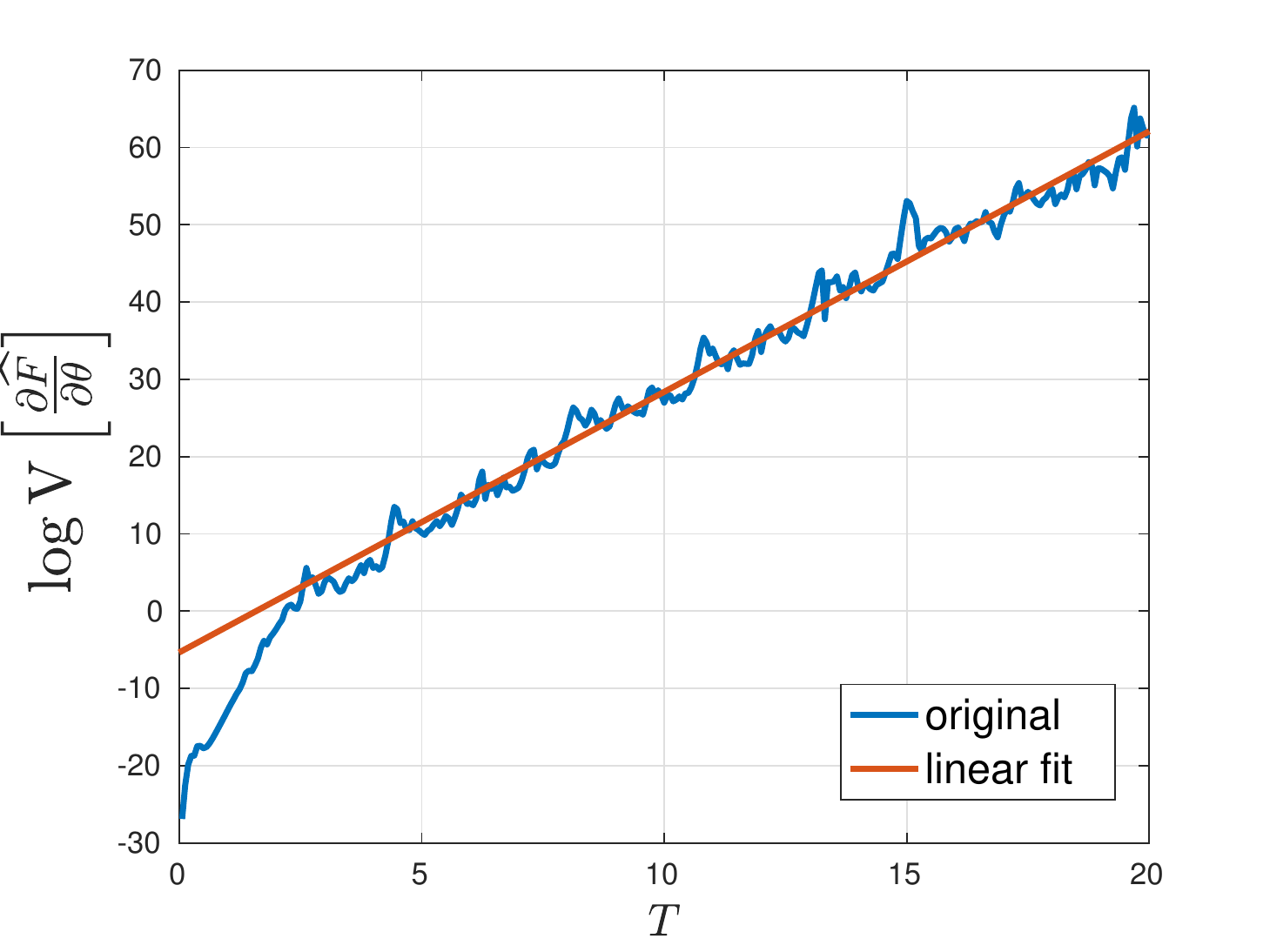}
\caption{Exponential increase of the variance of standard PS estimator}
\label{ExponStdGreek}
\end{figure}
It shows clearly that the log variance increase linearly in $T$ which implies the variance itself increases exponentially. The slope we fitted is $3.37$ which is an approximation of $\kappa'/\lambda^*$ and indicates that the computational cost the standard Monte Carlo method using standard PS estimator to achieve $O(\varepsilon^2)$ MSE becomes $O(\varepsilon^{-5.37}|\log \varepsilon|).$

Next, following \eqref{Malliavin estimation} and \eqref{Malliavin estimator}, we have $\gamma(X_t) = ( 0, X_t^1, 0)^T$ and get the Malliavin estimation
\[
\frac{\partial F(\theta)}{\partial \theta} = \EE^{\mathbb{P}}\left[ X_T^3 \int_0^T \frac{1}{\sigma} X_t^1\, \D W^{\mathbb{P},2}_t\right],
\]
and the Malliavin estimator
\[
\frac{\partial \widetilde{F}}{\partial \theta} = \hX^3_T \sum_{n=0}^{N-1} \hX^1_{t_n} \Delta W_n^{\mathbb{P},2}.
\]
Finally, following \eqref{Change of Measure Greek formula}, we obtain the new SDEs with importance sampling,
\begin{equation*}
\begin{aligned}
\D\, Y_t \ = 
\begin{pmatrix}
10(Y_t^2-Y_t^1)\\
Y_t^1(28-Y_t^3) -Y_t^2\\
Y_t^1Y_t^2-\frac{8}{3}Y_t^3
\end{pmatrix}
\D t + \sigma\ \D W_t^{\mathbb{P}},
\end{aligned} 
\end{equation*}
and
\begin{equation*}
\begin{aligned}
\D\, y_t \ = \left(
\begin{pmatrix}
0\\
Y_t^1\\
0
\end{pmatrix}
+
\begin{pmatrix}
-10-S & 10 & \\
28-Y_t^3 & -1-S & -Y_t^1\\
Y_t^2 & Y_t^1 & -\frac{8}{3}-S
\end{pmatrix}
y_t
\right) \D t, 
\end{aligned} 
\end{equation*}
for some $S>0.$ We get the new PS expression
\begin{equation}
\frac{\partial F(\theta)}{\partial \theta} =  \ \EE^{\mathbb{P}}\left[ y_T^3 +Y_T^3\int_0^T \frac{S}{\sigma} \left\langle y_t,\,\D W_t^{\mathbb{P}} \right\rangle\right],
\end{equation}
and the new PS estimator
\begin{equation}
\label{COM Lorenz estimator}
\frac{\partial\breve{F}}{\partial \theta} =
\hy^3_T+\hY^3_T \sum_{n=0}^{N-1} \frac{S}{\sigma} \left\langle \hy_{t_n},\,\Delta W_{n}^{\mathbb{P}} \right\rangle.
\end{equation}
Numerically, we again set same $T,\ h,\ \sigma$ and $N$ as above and choose $S=10.$ Then we plot the variances of the Mallianvin and new PS estimators with respect to $T$ in Figure \ref{LinearCOMGreek}. 
\begin{figure}[h]
\center
\subfigure[Comparison of Variances]{
\label{LinearCOMGreek}
\includegraphics[width=0.47\textwidth]{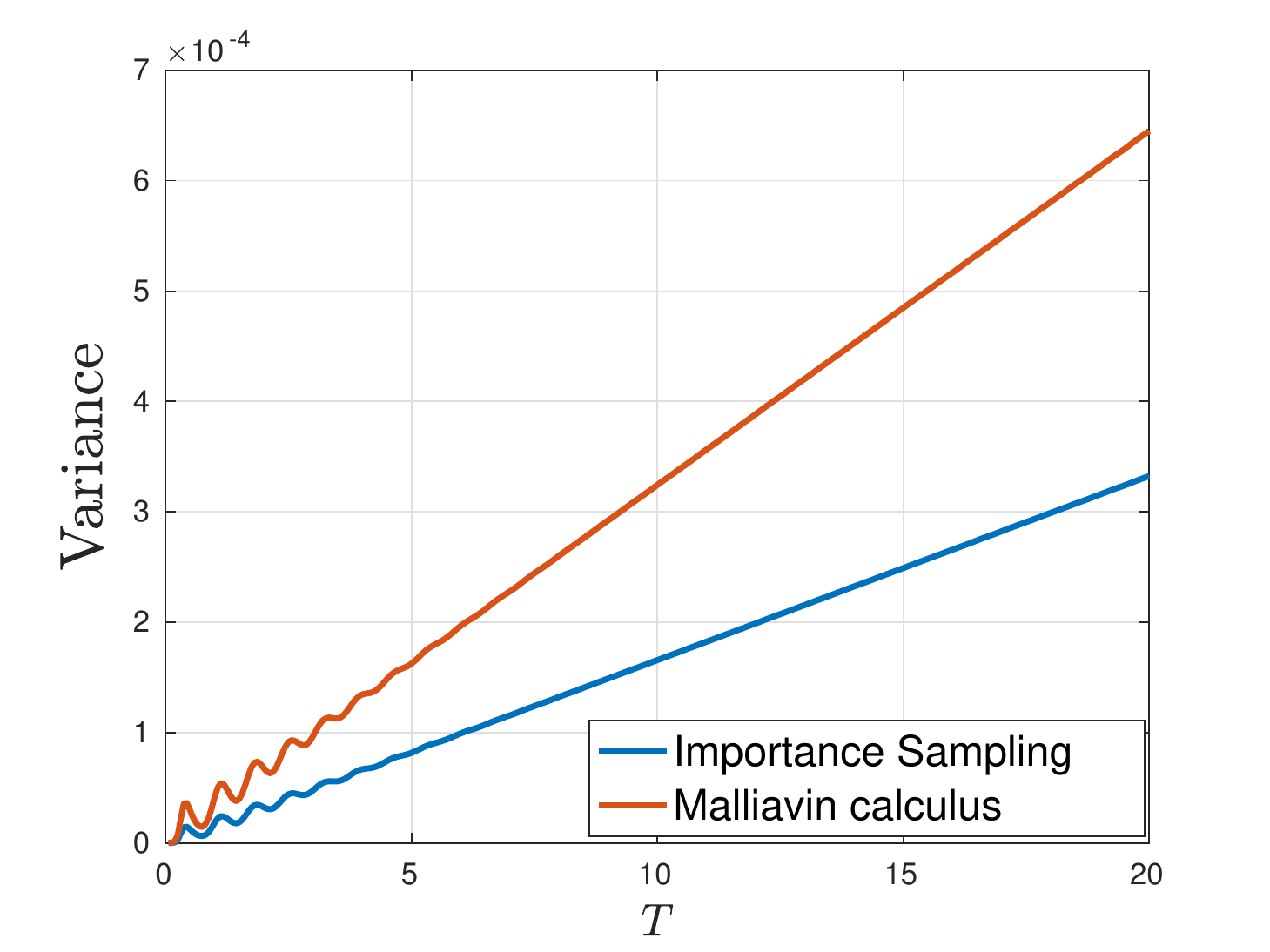}
}
\subfigure[Comparison of Expectations]{
\label{COM vs Malli}
\includegraphics[width=0.47\textwidth]{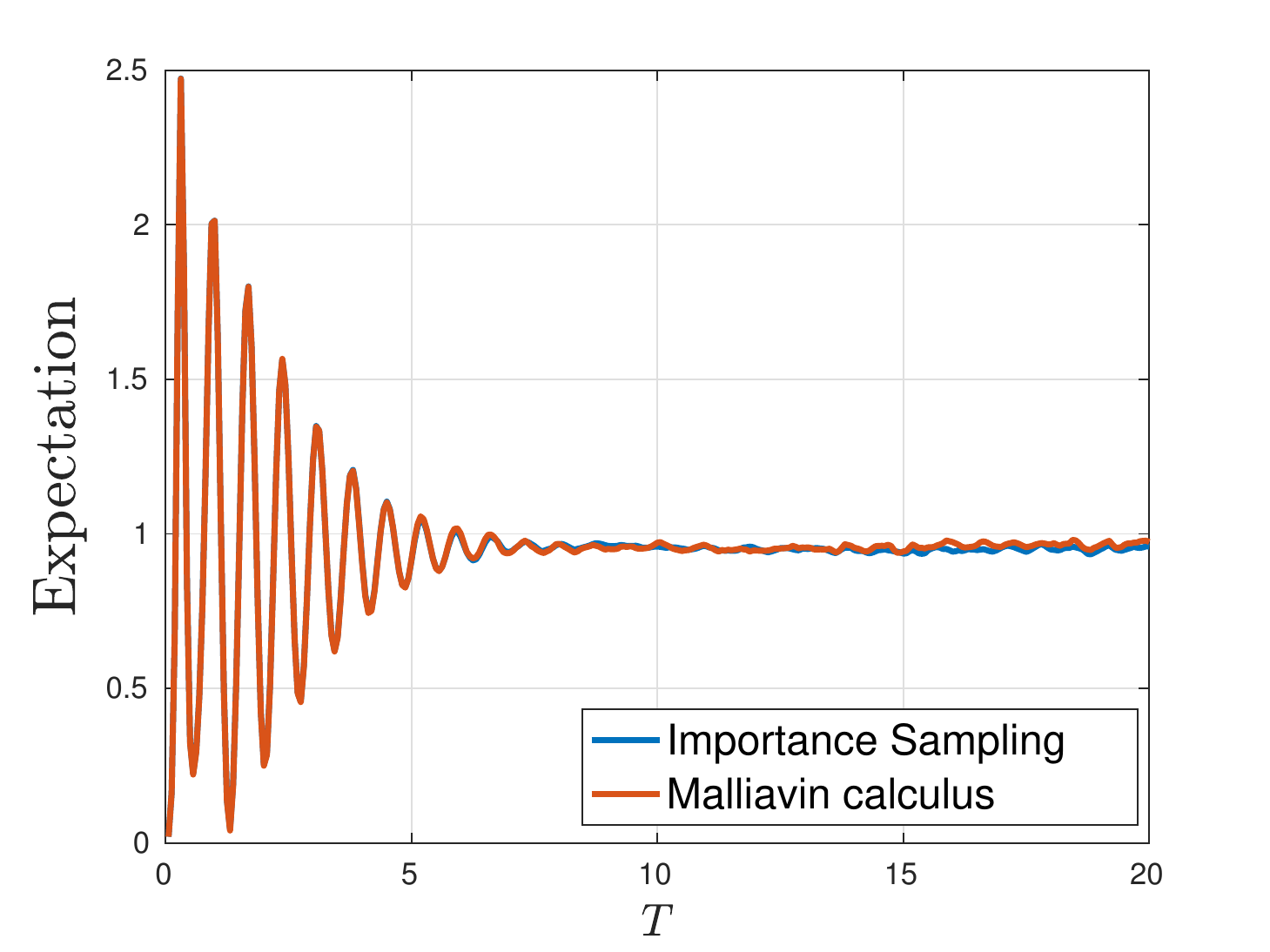}
}
\caption{Comparison of Malliavin and new PS (Importance Sampling) estimators}
\end{figure}

Figure \ref{LinearCOMGreek} shows clearly the variances of both Malliavin and new PS estimators increase linearly in $T,$ but the variance of the new PS estimator has a smaller constant factor and is slightly more efficient. Figure \ref{COM vs Malli} shows that both methods give similar estimates and confirms that the estimators are ergodic and converge to the sensitivity under the invariant measure.





\section{MLMC for sensitivity}
In this section, we extend both the Malliavin and the new PS estimators to MLMC \cite{giles08,giles15}. For ergodic SDEs without contractivity, we need to employ the change of measure technique introduced in \cite{Part3} to add a spring term between the fine and coarse paths, since otherwise the fine and coarse paths may diverge exponentially due to the chaotic property. We first quickly review this technique and then present the numerical results.

\subsection{MLMC with change of measure}
Instead of considering the fine and coarse paths of the original SDEs under the same measure $\mathbb{P}:$
\begin{eqnarray*}
\label{Eq: SDE before change}
\D X_t^f &=& f(X_t^f)\,\D t\  +\ \sigma\, \D W_t^{\mathbb{P}},\nonumber\\
\D X_t^c &=& f(X_t^c)\,\D t\ +\ \sigma\, \D W_t^{\mathbb{P}}.
\end{eqnarray*}
We add a spring term with spring coefficient $S>0$ for both fine path and coarse paths and consider both paths under different measures:
\begin{eqnarray*}
\label{Eq: SDE for change}
\mathbb{Q}^f:\ \ \D Y_t^f &=&  f(Y_t^f)\,\D t + \sigma\,\D W_t^{\mathbb{Q}^f},\nonumber\\
\mathbb{Q}^c:\ \ \D Y_t^c &=&  f(Y_t^c)\,\D t + \sigma\,\D W_t^{\mathbb{Q}^c},
\end{eqnarray*}
with
\begin{eqnarray}
\label{COM Fine Coarse W}
\D W_t^{\mathbb{Q}^f} &=& \frac{S}{\sigma}(Y_t^c-Y_t^f)\,\D t+ \D W_t^{\mathbb{P}},\nonumber\\
\D W_t^{\mathbb{Q}^c} &=& \frac{S}{\sigma}(Y_t^f-Y_t^c)\,\D t + \D W_t^{\mathbb{P}}.
\end{eqnarray}
Therefore, under simulation measure $\mathbb{P},$ we obtain
\begin{eqnarray*}
\label{Eq: SDE for change}
\D Y_t^f &=&  S(Y_t^c-Y_t^f)\,\D t +f(Y_t^f)\,\D t + \sigma\,\D W_t^{\mathbb{P}},\nonumber\\
\D Y_t^c &=&   S(Y_t^f-Y_t^c)\,\D t+ f(Y_t^c)\,\D t +\sigma\,\D W_t^{\mathbb{P}}.
\end{eqnarray*}
The Girsanov theorem gives
\begin{equation}
\label{MLMC level}
\EE^{\mathbb{P}}[\varphi(X_T^f)]-\EE^{\mathbb{P}}[\varphi(X_T^c)] =\EE^{\mathbb{Q}^f}[\varphi(Y_T^f)]-\EE^{\mathbb{Q}^c}[\varphi(Y_T^c)]=
\EE^{\mathbb{P}}\left[\varphi(Y_T^f)\frac{\D \mathbb{Q}^f}{\D \mathbb{P}_T}-\varphi(Y_T^c)\frac{\D \mathbb{Q}^c}{\D \mathbb{P}_T}\right],
\end{equation}
where $\frac{\D \mathbb{Q}^f}{\D \mathbb{P}_T}$ is the corresponding Radon-Nikodym derivative with the following form:
\begin{equation}
\label{MLMC RD}
\frac{\D \mathbb{Q}^f}{\D \mathbb{P}_T} = \exp\left(-\int_0^T \left\langle \frac{S}{\sigma}(Y_t^f-Y_t^c),\,\D W_t^\mathbb{P} \right\rangle - \frac{1}{2} \int_0^T \frac{S^2}{\sigma^2}\left\|Y_t^f-Y_t^c\right\|^2\,\D t \right)
\end{equation}
and $\frac{\D \mathbb{Q}^c}{\D \mathbb{P}_T}$ is similar.
 The benefit of this technique is that under measure $\mathbb{P},$ we recover the contractivity between $Y_t^c$ and $Y_t^f$ and the variance of the level estimator increases linearly in $T$ instead of exponentially. Note that for the numerical implementation, we derive the exact Radon-Nikodym derivative for the numerical solution instead of the numerical approximation of \eqref{MLMC RD}. For the detailed numerical scheme, see section 2 in \cite{Part3}.

However, in this paper, we observe that both Malliavin and new PS estimations \eqref{Malliavin estimation} and \eqref{New PS estimator} involve It\^{o} integrals and we need to be careful when performing the change of measure and respect the following identity:
\[
\EE^{\mathbb{Q}^f}\left[\int_0^T \left\langle \psi(Y_t^f),\  \D W_t^{\mathbb{Q}^f}\right\rangle\right]
=
\EE^{\mathbb{P}}\left[\int_0^T \left\langle \psi(Y_t^f),\  \frac{S}{\sigma}(Y_t^c-Y_t^f)\,\D t+ \D W_t^{\mathbb{P}}\right\rangle \frac{\D \mathbb{Q}^f}{\D \mathbb{P}_T}\,\right],
\]
where the It\^{o} integral is viewed as a functional of the whole path. The identity for the coarse path is similar. Therefore, for numerical experiments, we need to use $W_t^{\mathbb{P}}$ to reconstruct $W_t^{\mathbb{Q}^f}$ and $W_t^{\mathbb{Q}^c}$ to calculate the integral following \eqref{COM Fine Coarse W}.
\vspace{1em}
\begin{theorem}[Level Estimators with Change of Measure]
\label{Theorem: MLMC COM}
Suppose $\hY_t$ and $\hy_t$ satisfy the Assumption \ref{assp:Numerical solution} under measure $\mathbb{P}$ and $f$ and $\gamma$ are globally Lipschitz, the variance of the new MLMC level estimator with change of measure \eqref{MLMC level} and timestep $h$ increases linearly with respect to $T^2,$ that is
\[
\mathbb{V}\left[
\tilde{\varphi}(\widehat{Y}^f_T)\, \frac{\D \widehat{\QQ}^f}{\D \PP_T} -
\tilde{\varphi}(\widehat{Y}^c_T)\, \frac{\D \widehat{\QQ}^c}{\D \PP_T} 
\right] 
\leq \frac{\eta'''}{\sigma^4}\, T^2\, h^2.
\]  
for some constant $\eta'''>0,$ where $\tilde{\varphi}$ is the functional of the Malliavin or new PS estimator. 
\end{theorem}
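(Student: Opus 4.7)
The plan is to bound the variance by the second moment, and to apply the algebraic identity
\[
A_f D_f - A_c D_c \ = \ (A_f - A_c)\, D_f \ + \ A_c\, (D_f - D_c),
\]
where I write $A_\star = \tilde{\varphi}(\hY^\star_T)$ and $D_\star = \D\widehat{\QQ}^\star/\D\PP_T$ for $\star\in\{f,c\}$. Applying Cauchy--Schwarz to each piece reduces matters to obtaining the four fourth-moment estimates
\[
\EE[D_f^4] \ \lesssim\ 1,\qquad \EE[A_c^4] \ \lesssim\ T^2/\sigma^4,\qquad \EE[|A_f-A_c|^4] \ \lesssim\ T^2 h^4/\sigma^4,\qquad \EE[|D_f-D_c|^4] \ \lesssim\ T^2 h^4/\sigma^4,
\]
each uniformly in $T$. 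Combined pairwise, these give exactly the desired $T^2 h^2/\sigma^4$ scaling.

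The boundedness of $\EE[D_f^4]$ (and $\EE[D_c^4]$) follows from the contractivity of fine/coarse paths under $\PP$ established in \cite{Part3}, which yields $\sup_t\EE\|\hY^f_t - \hY^c_t\|^p \leq C_p h^p$ uniformly in $T$; substituting this into the exponential martingale \eqref{MLMC RD} via standard Novikov-type estimates bounds all moments. The bound on $\EE[A_c^4]$ is obtained by expanding $\tilde{\varphi}(\hY^c_T)$ as a product of $\varphi(\hY^c_T)$ (polynomial growth, uniformly bounded moments by Assumption~\ref{assp:Numerical solution}) times a discrete stochastic integral of $\gamma(\hY^c_{\cdot})/\sigma$ or $S\hy^{(0)}_{\cdot}/\sigma$ against the reconstructed Brownian increments; by Burkholder--Davis--Gundy and the uniform moment bounds on $\hY^c$ and $\hy^{(0)}$, this integral has fourth moment of order $T^2/\sigma^4$, and a further Cauchy--Schwarz splitting yields the claim.

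The bound on $\EE[|D_f - D_c|^4]$ exploits the structural identity
\[
D_f - D_c \ = \ \e^{-B}\bigl(\e^{-C} - \e^{C}\bigr) \ = \ -2\,\e^{-B}\sinh(C),
\]
with $C = \int_0^T \langle S(\hY^f_t-\hY^c_t)/\sigma,\, \D W_t^{\PP} \rangle$ and $B = \tfrac12\int_0^T S^2\|\hY^f_t-\hY^c_t\|^2/\sigma^2\,\D t$ (noting $(\hY^c-\hY^f)^2=(\hY^f-\hY^c)^2$ so $B$ is common). The elementary bound $|\sinh(C)| \leq |C|\,\e^{|C|}$, H\"older, and the It\^o isometry together with uniform strong convergence then yield $\EE|C|^{2p} \lesssim (Th^2 S^2/\sigma^2)^p$ and $\EE[\e^{q|C|-B}]<\infty$ uniformly in $T$, giving the desired $T^2 h^4/\sigma^4$ bound. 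Finally, $\EE[|A_f - A_c|^4]$ is obtained by a further $(\varphi,\,\text{integral})$ splitting: the smooth factor contributes $\varphi(\hY^f_T)-\varphi(\hY^c_T)$ of size $h$ by Lipschitz continuity and uniform strong convergence, while the integral factor, $\sum_n \langle\gamma(\hY^f_{t_n})/\sigma,\,\Delta W^{\widehat{\QQ}^f}_n\rangle - \sum_n \langle\gamma(\hY^c_{t_n})/\sigma,\,\Delta W^{\widehat{\QQ}^c}_n\rangle$, is handled by reconstructing $\Delta W^{\widehat{\QQ}^\star}_n$ from $\Delta W^{\PP}_n$ via \eqref{COM Fine Coarse W} and then comparing integrands via global Lipschitzness of $\gamma$; BDG gives the $T$ factor and uniform strong convergence gives the $h^2$ factor in the $L^4$-squared sense.

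The main obstacle is the careful bookkeeping in the last step: the stochastic integral inside $\tilde{\varphi}$ is computed with respect to \emph{different} Brownian motions $W^{\widehat{\QQ}^f}$ and $W^{\widehat{\QQ}^c}$, which must both be reconstructed from the simulation Brownian motion $W^{\PP}$ using \eqref{COM Fine Coarse W}, and the resulting drift corrections $\tfrac{S}{\sigma}(\hY^c-\hY^f)\,\D t$ are themselves of order $h$ in strong sense but are integrated over $[0,T]$; making sure these do not destroy the $T^2 h^2$ variance decay, and that every use of Girsanov/BDG is valid under Assumption~\ref{assp:Numerical solution} and the global Lipschitz hypotheses on $f$ and $\gamma$, is the delicate part.
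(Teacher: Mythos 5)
Your decomposition $(A_f-A_c)D_f + A_c(D_f-D_c)$ with Cauchy--Schwarz reproduces exactly the accounting in the paper's (very terse) proof, which simply invokes the argument of Theorem 4 in \cite{Part3} together with the moment bound $\EE\left[|\tilde{\varphi}(\hY^f_T)|^p\right]\leq \lambda''\,T^{p/2}/\sigma^{p}$ --- namely $h^2$ from first-order strong coupling, one factor $T/\sigma^2$ from the It\^{o} integral inside $\tilde{\varphi}$, and another $T/\sigma^2$ from the Radon--Nikodym difference --- so your route is essentially the paper's. The only points you assert rather than establish are the uniform-in-$T$ exponential-moment bounds for the Radon--Nikodym factors (polynomial bounds on $\|\hY^f_t-\hY^c_t\|$ alone give only a factor of order $\exp(c\,T h^2 S^2/\sigma^2)$, which is precisely the technical content the paper delegates to \cite{Part3}), and your intermediate estimate $\EE\left[|A_f-A_c|^4\right]\lesssim T^2h^4/\sigma^4$ overlooks the non-cancelling drift corrections $\sum_n \frac{S}{\sigma^2}\left\langle \gamma(\hY^f_{t_n})+\gamma(\hY^c_{t_n}),\,\hY^c_{t_n}-\hY^f_{t_n}\right\rangle h_n$, which are of order $Th$ rather than $\sqrt{T}h$; since that term is paired with the bounded moment of $D_f$, the final $T^2h^2/\sigma^4$ bound is unaffected.
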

\begin{proof}
Using the same approach in the proof of Theorem 4 in \cite{Part3} and the fact that $\EE\left[|\tilde{\varphi}(\widehat{Y}^f_T)|^p\right] \leq  \lambda''/\sigma^{p}\, T^{p/2}$ gives the final result. Intuitively, the first order strong convergence gives $h^2,$ the It\^{o} integral gives a factor $T/\sigma^2$ by previous Theorems and another factor of $T/\sigma^2$ comes from the Radon-Nikodym derivative \eqref{MLMC RD}.
\end{proof}

The MLMC theorem in \cite{Part3} applies here except that $V_0=O(T/\sigma^2).$ By choosing $T=O(|\log\varepsilon|)$ we have $V_0=O(|\log \varepsilon|),$ and the optimal computational cost of the MLMC with change of measure to achieve $O(\varepsilon^2)$ MSE becomes $O(\varepsilon^{-2}|\log\varepsilon|^3).$ Compared with the results in \cite{Part3}, the additional log term comes from the order of $V_0.$

\subsection{Numerical results}
In this subsection, we discuss the applications of standard MLMC and MLMC with change of measure to both Malliavin estimator \eqref{Malliavin estimation} and new PS estimator \eqref{New PS estimator}. Numerically, we use an adaptive timestep with $h_0=2^{-7}$ and $N=10^5$ to compute the variances $V_0$ on level 0 and $V_1$ level 1 with respect to $T$ for stochastic Lorenz equation with $\sigma=6.$

First, we directly use the standard MLMC scheme for both estimators without change of measure. The variance on level $0$ increases linearly in $T$ which is the same as the standard Monte Carlo method. However, for level $\ell\geq 1,$ the variance of level estimator still increases exponentially due to the chaotic property of the original SDEs. 

\begin{figure}[H]
\center
\subfigure[Malliavin estimator]{
\includegraphics[width=0.45\textwidth]{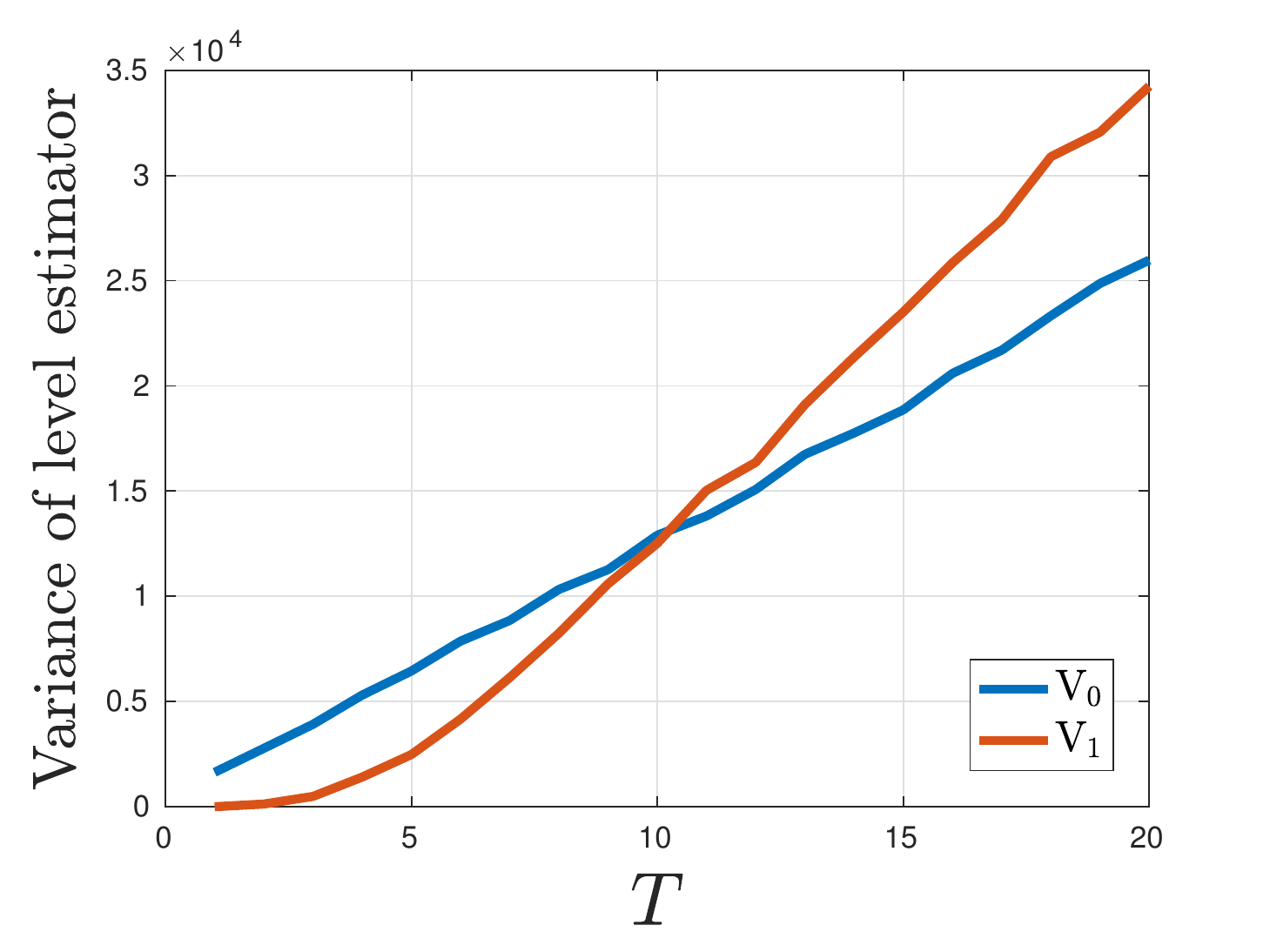}
}
\subfigure[New PS estimator]{
\includegraphics[width=0.45\textwidth]{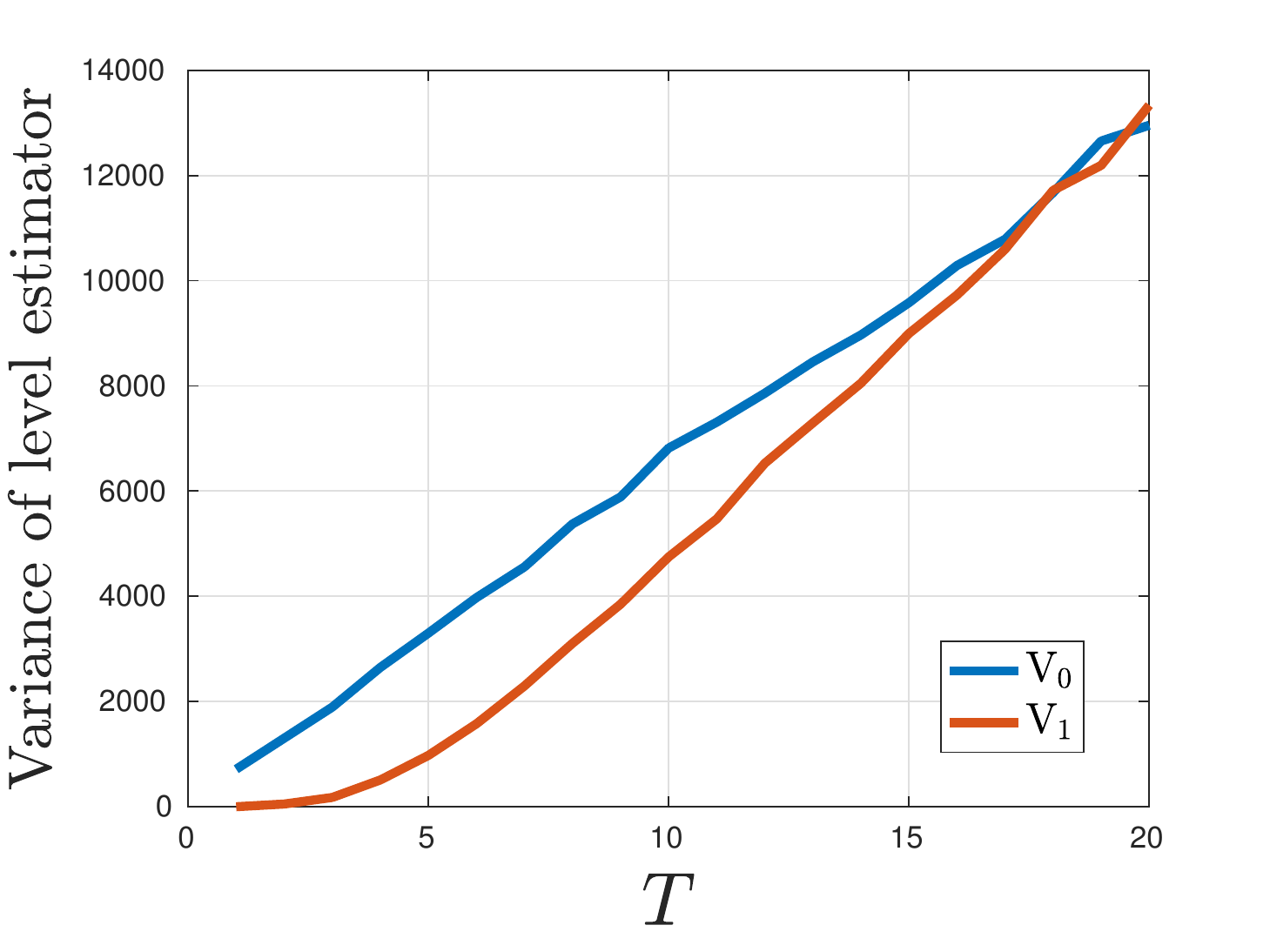}
}
\caption{MLMC variances on level 0 and 1 without change of measure}
\label{MLMC V}
\end{figure}

In Figure \ref{MLMC V}, we plot the $V_0$ and $V_1$ with respect to $T$ and it confirms the linear increase of $V_0$ and the initial exponential increase of $V_1.$ For both estimators, $V_1$ becomes larger than $V_0$ as $T$ increases due to the bad coupling between fine and coarse paths. 

Second, we add the spring term between the fine and coarse paths and perform the change of measure on the level estimator following \eqref{MLMC level} with $S=10$. The variance on level $0$ increases linearly. For level $\ell\geq1,$ the variance of level estimator increases only quadratically as shown in Theorem \ref{Theorem: MLMC COM}. 

\begin{figure}[H]
\center
\subfigure[Malliavin estimator]{
\includegraphics[width=0.45\textwidth]{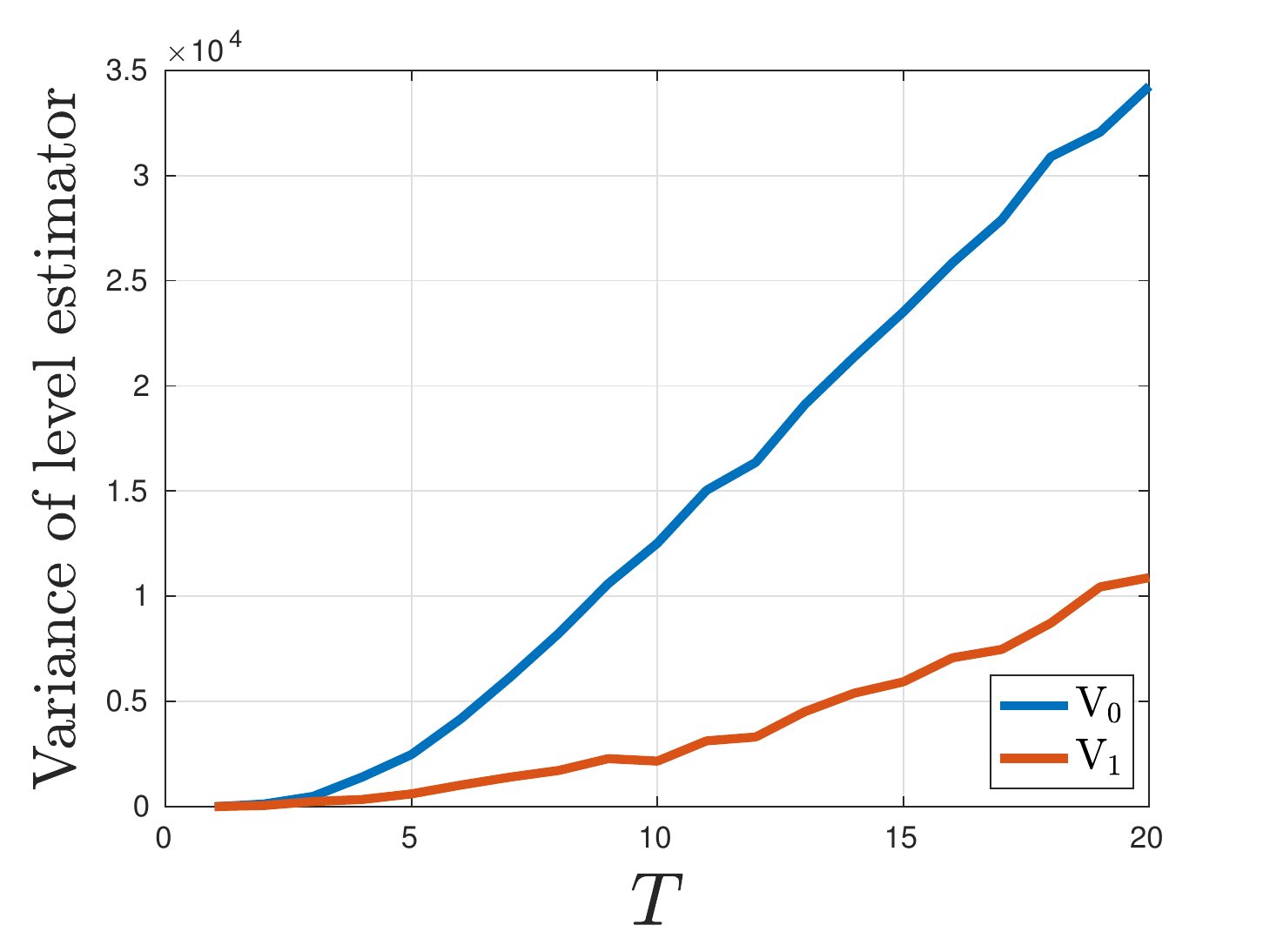}
}
\subfigure[New PS estimator]{
\includegraphics[width=0.45\textwidth]{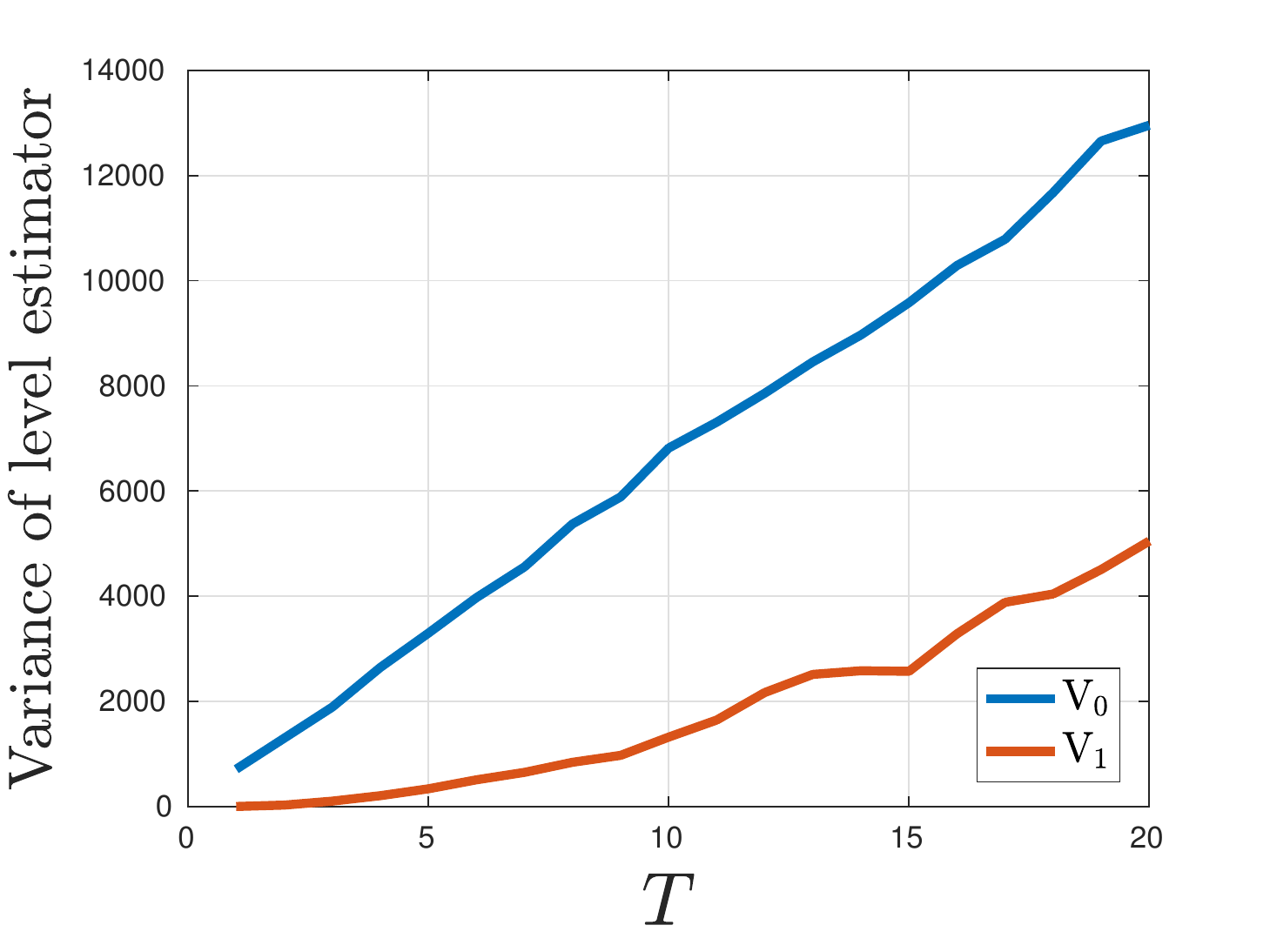}
}
\caption{MLMC variances on level 0 and 1 with change of measure}
\label{MLMC COM V}
\end{figure}

In Figure \ref{MLMC COM V}, we plot the $V_0$ and $V_1$ with respect to $T$ and it confirms the linear increase of $V_0$ and the good coupling on level 1. The quadratic increases with respect to $T$ are shown in Figure \ref{MLMC-V1-T2} by plotting $V_1$ with respect to $T^2.$

\begin{figure}[H]
\center
\subfigure[Malliavin estimator]{
\includegraphics[width=0.45\textwidth]{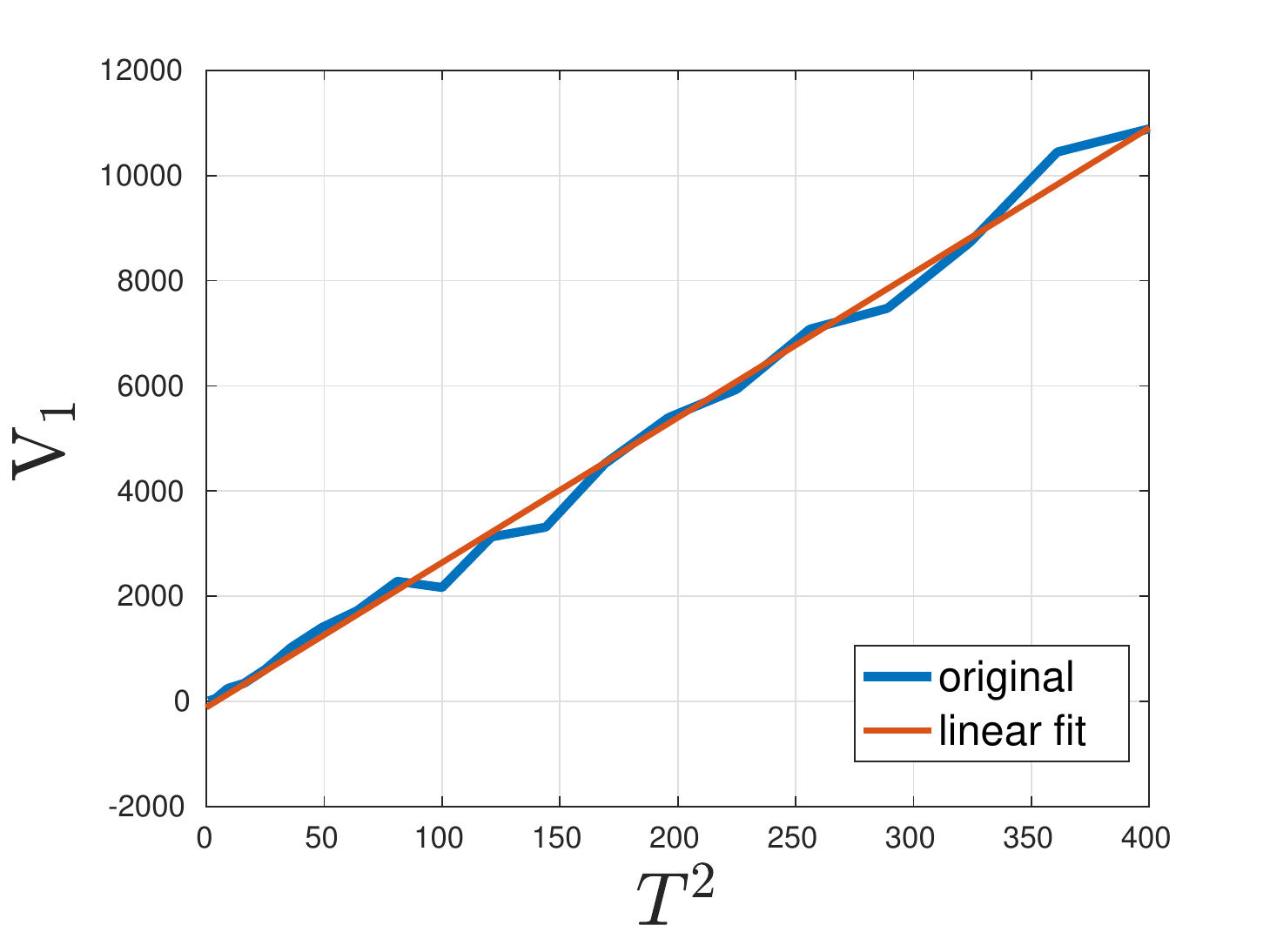}
}
\subfigure[New PS estimator]{
\includegraphics[width=0.45\textwidth]{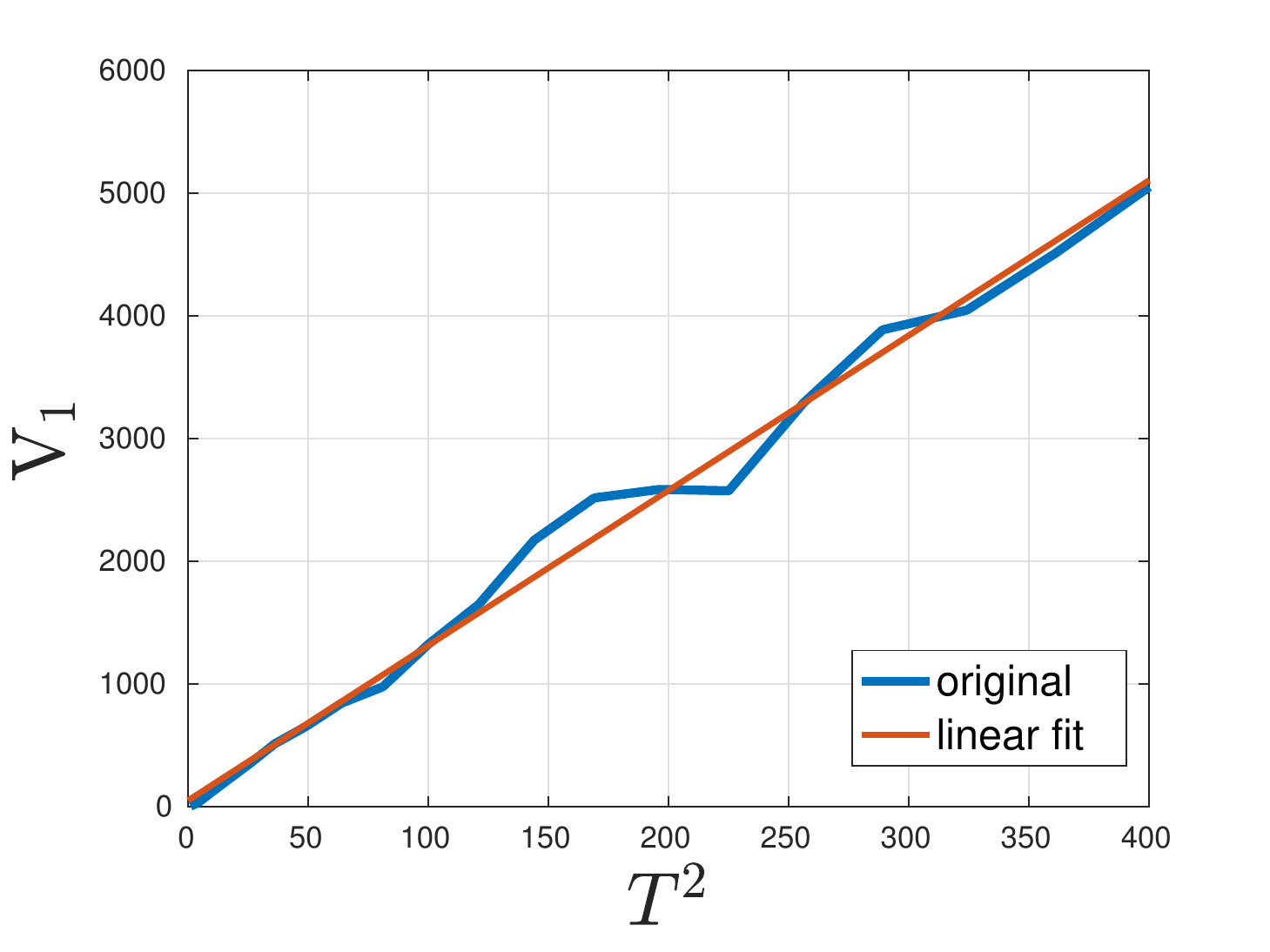}
}
\caption{Quadratic increase with respect to $T$}
\label{MLMC-V1-T2}
\end{figure}

Finally, we run the MLMC scheme for both estimators and choose $T=10$ which is sufficiently large for acceptable weak convergence by Figure \ref{COM vs Malli}.

\begin{figure}[H]
\center
\includegraphics[width=0.77\textwidth]{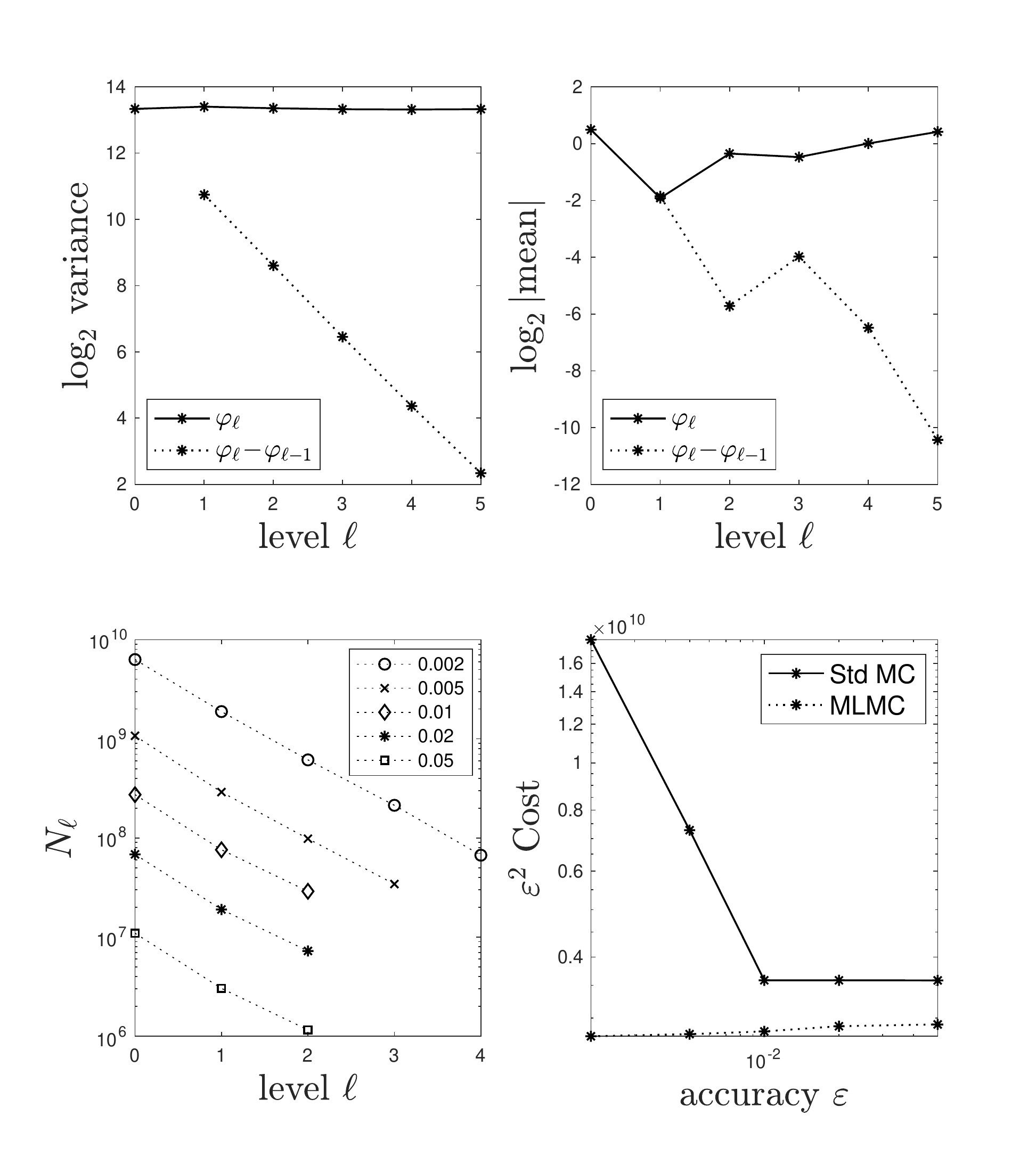}
\caption{MLMC for Malliavin estimator}
\label{MLMC-Malli-sum}
\end{figure}

Figures \ref{MLMC-Malli-sum} and \ref{MLMC-PS-sum} show the MLMC for both Malliavin and new PS estimators works well. The top left plot shows that the variance of level estimator decreases at rate 2, i.e. the variance is proportional to $2^{-2\ell}$ corresponding to the first order strong convergence. The top right plot shows the first order weak convergence. The bottom left plot shows the different number of paths on different level for different accuracy. The bottom right plot shows that the computational cost for MLMC is $O(\varepsilon^{-2})$ and standard Monte Carlo is $O(\varepsilon^{-3}).$

\begin{figure}[H]
\center
\includegraphics[width=0.8\textwidth]{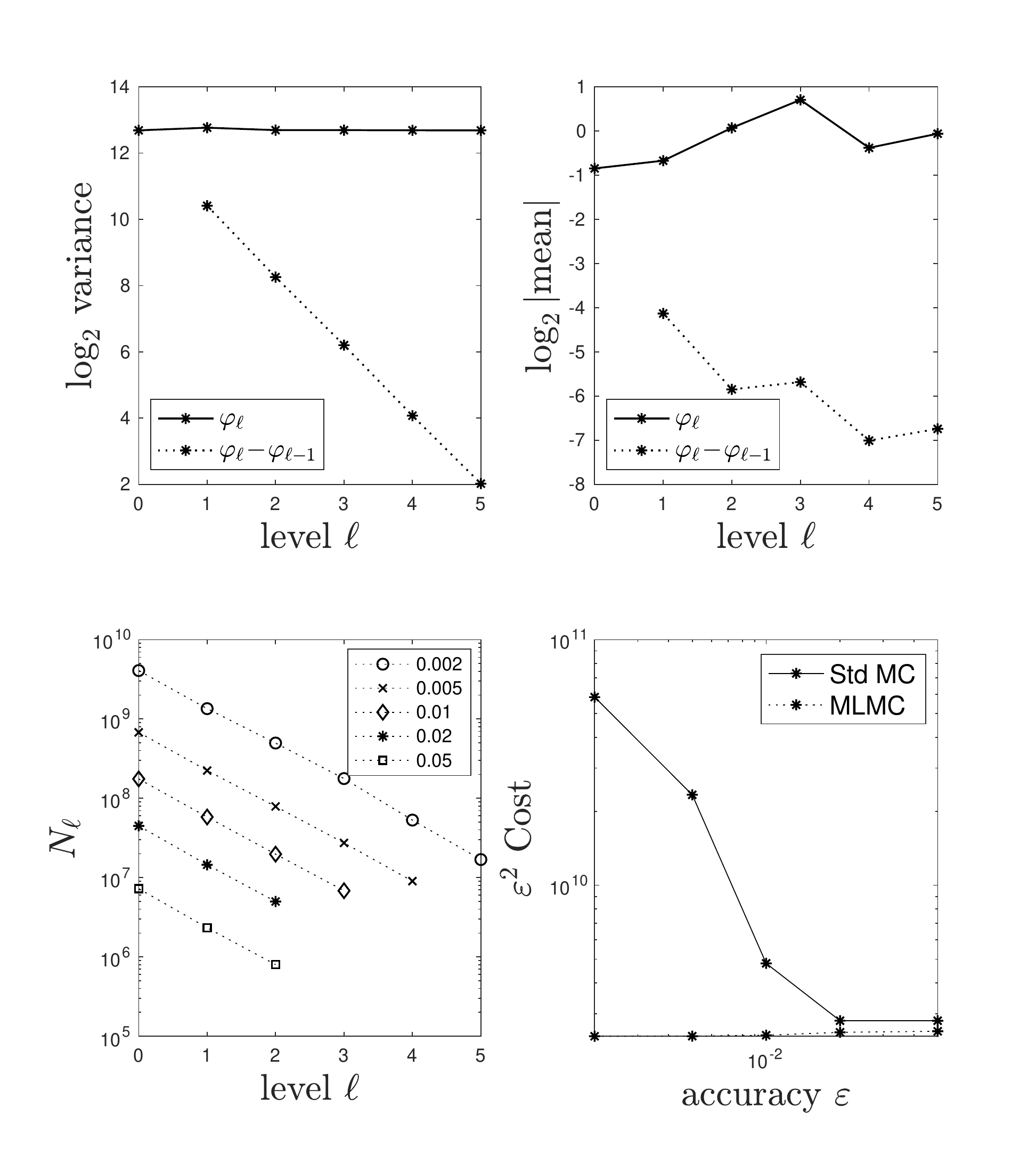}
\caption{MLMC for new PS estimator}
\label{MLMC-PS-sum}
\end{figure}

\if 0
\begin{table}[]
\centering
\caption{My caption}
\label{my-label}
\begin{tabular}{|c|c|c|c|c|c|c|c|c|}
\hline
\multirow{2}{*}{Method} & \multicolumn{2}{c|}{MLMC 1} & \multicolumn{2}{c|}{MLMC 2} & \multicolumn{2}{c|}{MLMC 3} & \multicolumn{2}{c|}{MLMC 4} \\ \cline{2-9} 
                        & 0            & 1            & 0            & 1            & 0            & 1            & 0            & 1            \\ \hline
1                       & 4.2804e+01   & 6.9024e-02   & 6.6307e+03   & 4.2178e+00   & 6.6307e+03   & 4.5395e+01   & 6.6307e+03   & 3.0520e-01   \\ \hline
2                       & 3.5593e+05   & 1.2072e+08   & 6.6582e+04   & 1.0841e+03   & 6.6582e+04   & 8.4591e+02   & 6.6582e+04   & 5.8523e+01   \\ \hline
3                       & 2.7008e+06   & 1.2268e+06   & 3.7753e+04   & 3.8497e+03   & 3.7753e+04   & 8.7447e+02   & 3.7753e+04   & 3.2188e+03   \\ \hline
4                       & 6.2593e+07   & 1.8599e+10   & 5.9712e+04   & 8.4464e+03   & 5.9712e+04   & 2.1968e+03   & 5.9712e+04   & 2.3882e+06   \\ \hline
5                       & 1.3948e+10   & 6.1829e+09   & 1.4644e+05   & 4.1010e+04   & 1.4644e+05   & 5.4902e+03   & 1.4644e+05   & 6.1208e+05   \\ \hline
6                       & 1.5267e+09   & 1.0306e+10   & 9.6318e+04   & 4.2087e+04   & 9.6318e+04   & 5.6751e+03   & 9.6318e+04   & 6.6920e+07   \\ \hline
7                       & 3.5916e+11   & 1.6156e+11   & 1.3841e+05   & 6.8133e+04   & 1.1645e+05   & 7.9425e+03   & 1.1645e+05   & 9.6222e+08   \\ \hline
8                       & 1.4342e+12   & 5.7184e+14   & 1.3841e+05   & 1.3841e+05   & 2.1028e+05   & 1.5503e+04   & 2.1028e+05   & 1.9179e+11   \\ \hline
9                       & 4.1875e+12   & 1.2226e+13   & 1.3841e+05   & 1.2724e+05   & 1.6534e+05   & 1.5477e+04   & 1.6534e+05   & 6.1576e+10   \\ \hline
10                      & 9.8429e+14   & 1.6774e+20   & 1.7553e+05   & 1.5386e+05   & 1.7553e+05   & 1.6842e+04   & 1.7553e+05   & 4.2638e+12   \\ \hline
11                      &              &              & 2.6100e+05   & 2.3035e+05   & 2.6100e+05   & 2.6433e+04   &              &              \\ \hline
12                      &              &              & 2.3908e+05   & 2.1876e+05   & 2.3908e+05   & 2.5782e+04   &              &              \\ \hline
13                      &              &              & 2.3540e+05   & 2.4205e+05   & 2.3540e+05   & 3.0441e+04   &              &              \\ \hline
14                      &              &              & 3.1572e+05   & 3.0927e+05   & 3.1572e+05   & 4.5820e+04   &              &              \\ \hline
15                      &              &              & 3.0219e+05   & 3.1317e+05   & 3.0219e+05   & 4.8058e+04   &              &              \\ \hline
16                      &              &              & 2.9574e+05   & 3.1963e+05   & 2.9574e+05   & 4.9219e+04   &              &              \\ \hline
17                      &              &              & 3.6288e+05   & 3.8751e+05   & 3.6288e+05   & 6.1347e+04   &              &              \\ \hline
18                      &              &              & 3.7180e+05   & 4.1441e+05   & 3.7180e+05   & 6.1112e+04   &              &              \\ \hline
19                      &              &              & 3.7028e+05   & 4.1685e+05   & 3.7028e+05   & 6.8747e+04   &              &              \\ \hline
20                      &              &              & 4.2463e+05   & 4.9328e+05   & 4.2463e+05   & 8.0054e+04   &              &              \\ \hline
\end{tabular}
\end{table}

\fi
\section{Other sensitivities}
In this section, we consider the sensitivities of the invariant measure with respect to the volatility parameter $\sigma$ and initial condition $\xi_0;$ the latter should be $0$. Note that the Malliavin estimators fail here since they involve the variation process shown in Proposition 3.3 in \cite{FLLLT99}. 

Assume we are calculating the sensitivity at $\sigma_0$ and consider the perturbed volatility $\sigma.$ The derivation is similar to Section \ref{New PS section} except that we have the new variation process with respect to $\sigma:$
\[
\D\,\zts_t =  \left(\frac{\partial f(Y_t^{(\sigma)}) }{\partial Y_t^{(\sigma)}} -SI \right) \zts_t \, \D t + \D W_t^{\tilde{\mathbb{Q}}},
\]
and the derivative of Radon-Nikodym derivative with respect to $\sigma,$
\begin{eqnarray*}
\frac{\partial \Rts_t}{\partial \sigma} &=& \Rts_t\left( \int_0^t \frac{S}{\sigma} \left\langle \zts_u,\,\D W_u^{\tilde{\mathbb{Q}}} \right\rangle +\int_0^t \frac{S}{\sigma^2} \left\langle Y_u^{(\sigma_0)}-\Yts_u,\,\D W_u^{\tilde{\mathbb{Q}}} \right\rangle \right.\\
&&\left.\ \ \ \ \ \ \ \ \ + \int_0^t \frac{S^2}{\sigma^2} \left\langle Y_u^{(\sigma_0)}-\Yts_u, \zts_u\right\rangle\, \D u + \int_0^t \frac{S^2}{\sigma^3} \left\|Y_u^{(\sigma_0)}-\Yts_u\right\|^2\, \D u \right).
\end{eqnarray*}

Therefore, taking $\sigma=\sigma_0$ and again under measure $\mathbb{P},$
\begin{equation}
\label{New PS vol estimator}
\left.\frac{\partial F(\sigma)}{\partial \sigma}\right|_{\sigma=\sigma_0} \ = \ \EE^{\mathbb{P}}\left[ \langle \nabla \varphi(Y^{(\sigma_0)}_T),\, z_T^{(\sigma_0)}\rangle +\varphi(Y^{(\sigma_0)}_T)\int_0^T \frac{S}{\sigma} \left\langle z_t^{(\sigma_0)},\,\D W_t^{\mathbb{P}} \right\rangle\right].
\end{equation}

Numerically, we only need to simulate $Y^{(\sigma_0)}_t$ and $y^{(\sigma_0)}_t$
\begin{eqnarray}
\label{Volatility Greek formula}
\D\, Y^{(\sigma_0)}_t \ &=&   f(Y^{(\sigma_0)}_t)   \  \D t\ + \sigma_0\ \D W_t^{\mathbb{P}}, \nonumber\\
\D\,z^{(\sigma_0)}_t\ &=&   \left(\frac{\partial f(Y_t^{(\sigma_0)}) }{\partial Y_t^{(\sigma_0)}} -SI \right) z^{(\sigma_0)}_t \, \D t \ + \ \D W_t^{\mathbb{P}},
\end{eqnarray}
and the new PS estimator is
\begin{equation}
\frac{\partial\breve{F}}{\partial \sigma} =
\left\langle \nabla \varphi(\hY^{(\sigma)}_T)\, , \hz^{(\sigma)}_T \right\rangle +\varphi(\hY^{(\sigma)}_T)\sum_{n=0}^{N-1} \frac{S}{\sigma} \left\langle \hz_{t_n}^{(\sigma)},\,\Delta W_{n}^{\mathbb{P}} \right\rangle.
\end{equation}
We perform the numerical experiment on the stochastic Lorenz equation with $\sigma=6.$ Figure \ref{Sigma Sen} shows the estimator is ergodic and the expectation converges to the sensitivity of the invariant measure 0.1274, which is close to the finite difference value 0.1268 using the data in Figure \ref{yitong4}. Figure \ref{Sigma Sen Var} confirms the linear increase of the variance.

\begin{figure}[h]
\center
\subfigure[Expectation with respect to $T$ ]{
\label{Sigma Sen}
\includegraphics[width=0.45\textwidth]{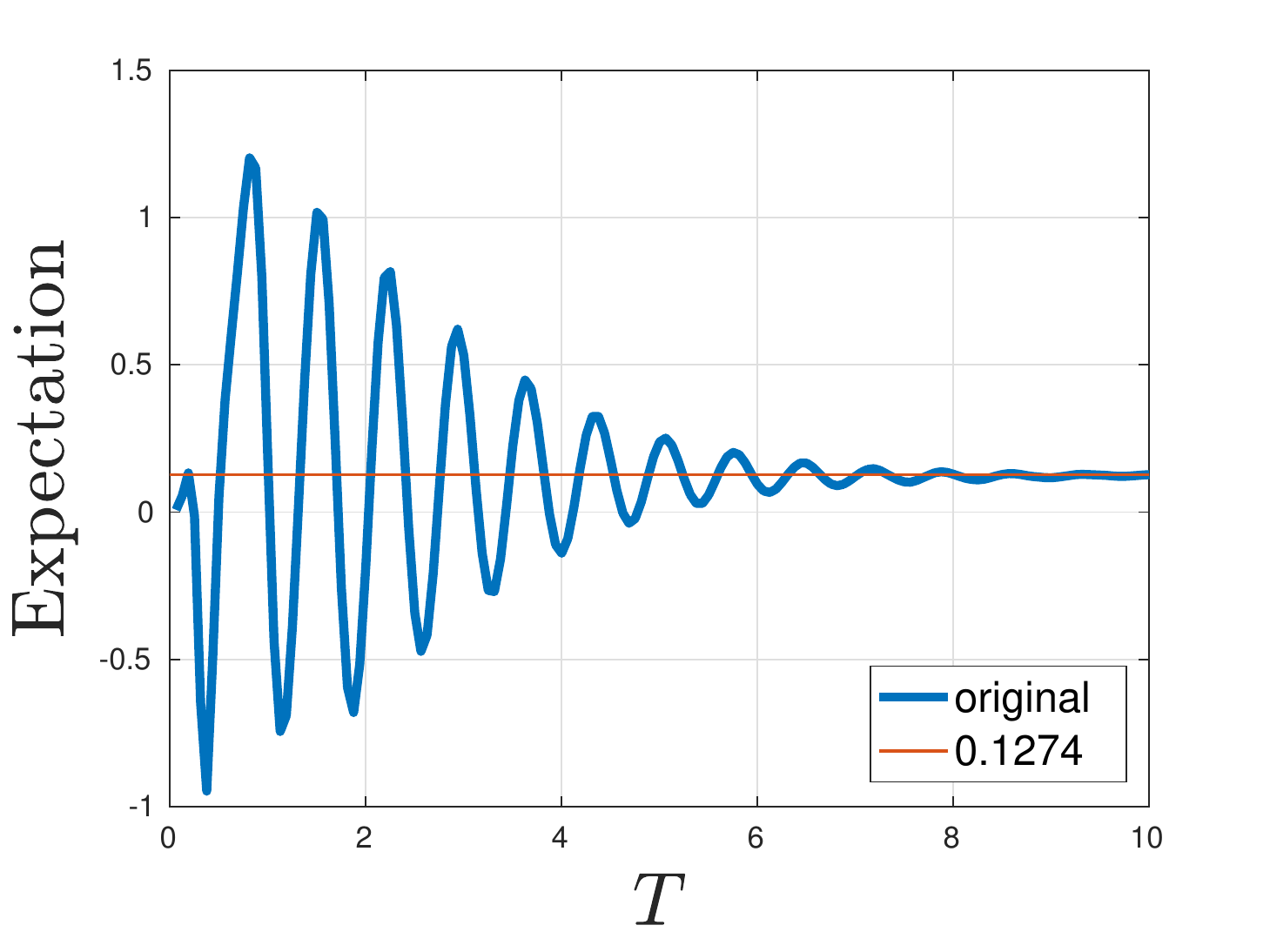}
}
\subfigure[Linear increase of variance]{
\label{Sigma Sen Var}
\includegraphics[width=0.45\textwidth]{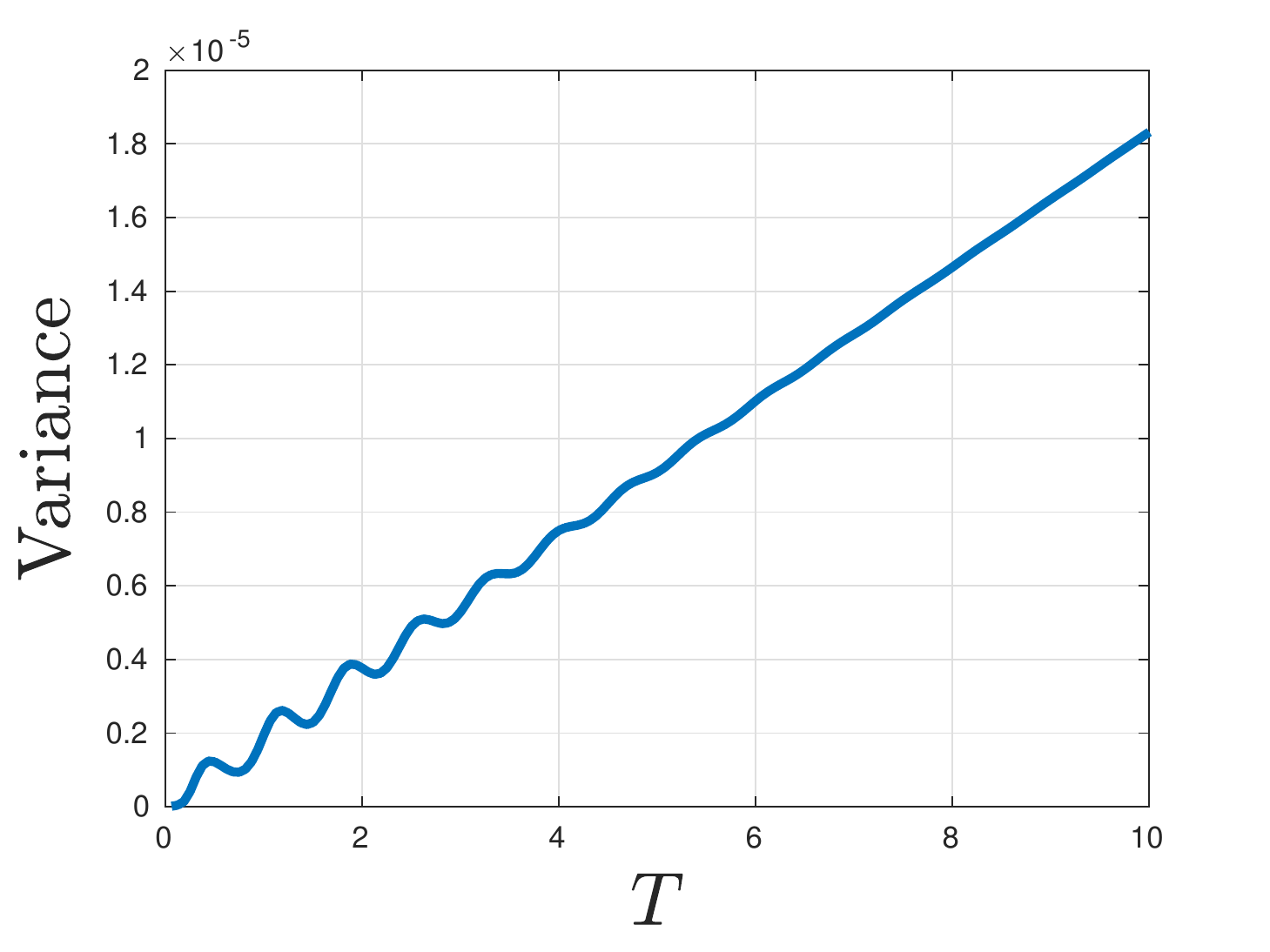}
}
\caption{ The sensitivity with respect to volatility $\sigma$}
\label{Sigma Sensitivity}
\end{figure}

For the initial condition $\xi_0$, we get the new variation process
\[
\D\,z_t =  \left(\frac{\partial f(Y_t) }{\partial Y_t} -SI \right) z_t \, \D t ,
\]
with $z_0=\mathrm{1}$ and the new PS estimator
\begin{equation}
\frac{\partial\breve{F}}{\partial \xi_0} =
\left\langle \nabla \varphi(\hY_T)\,, \hz_T \right\rangle +\varphi(\hY_T)\sum_{n=0}^{N-1} \frac{S}{\sigma} \left\langle \hz_{t_n},\,\Delta W_{n}^{\mathbb{P}} \right\rangle.
\end{equation}

\begin{figure}[h]
\center
\subfigure[Expectation with respect to $T$ ]{
\label{S0 Sen}
\includegraphics[width=0.45\textwidth]{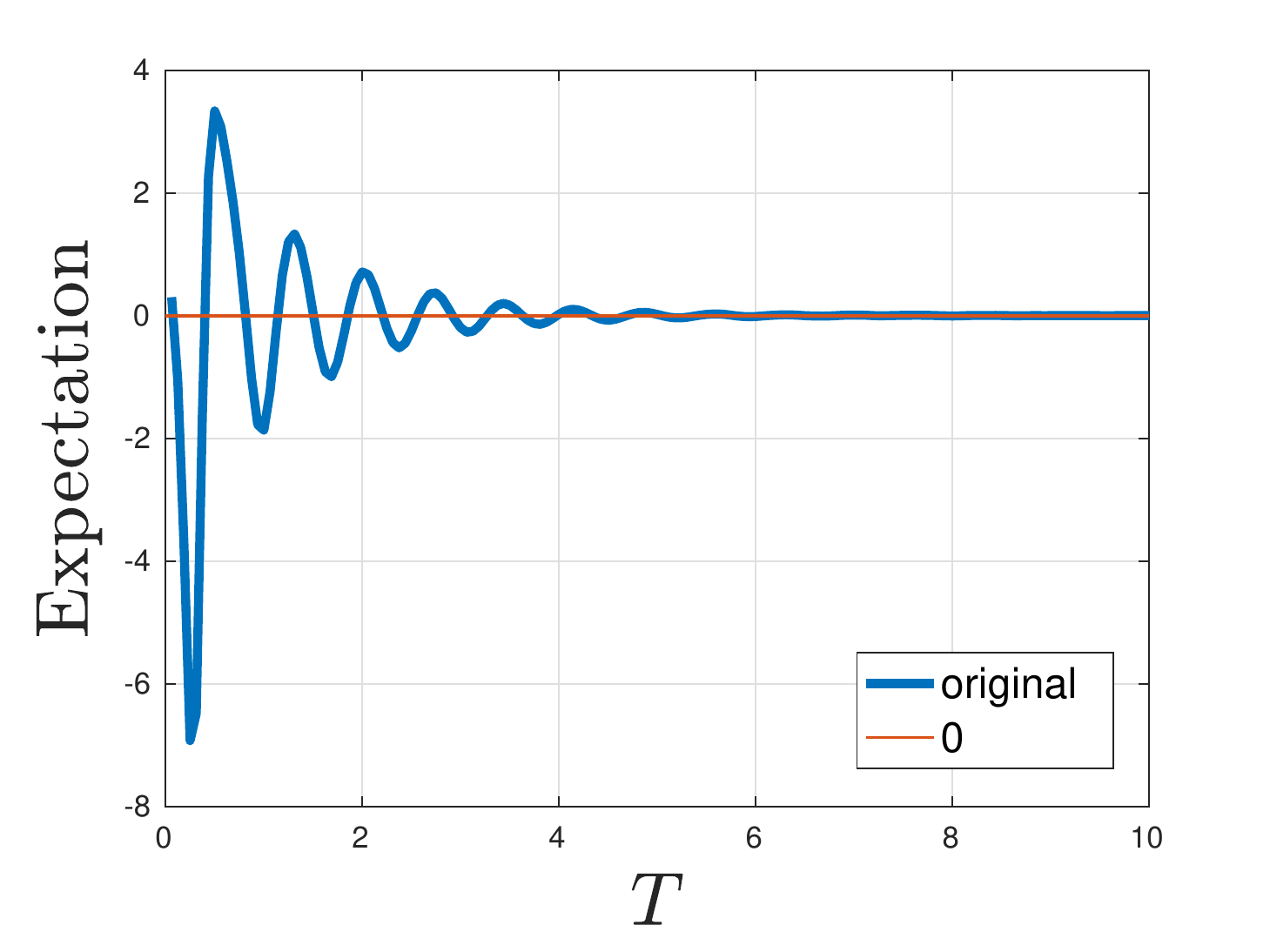}
}
\subfigure[Uniformly bounded variance]{
\label{S0 Sen Var}
\includegraphics[width=0.45\textwidth]{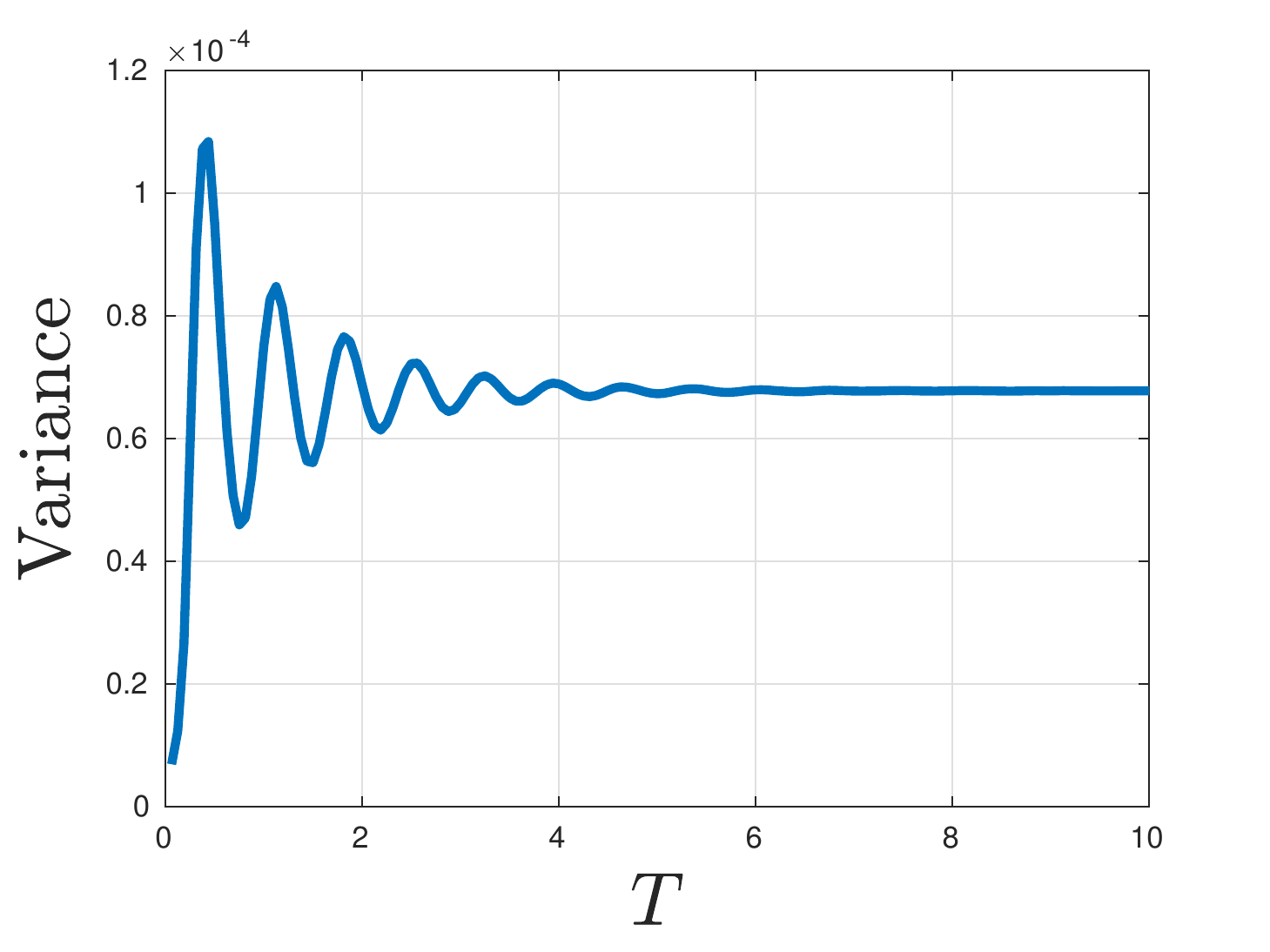}
}
\caption{ The sensitivity with respect to initial value}
\label{S0 Sensitivity}
\end{figure}

Figure \ref{S0 Sen} shows the estimator is ergodic and the expectation converges to $0$ since the initial condition won't affect the invariant measure. Figure \ref{S0 Sen Var} shows the variance is uniformly bounded.

\section{Extension to Lorenz problem} 
In this section, we consider using the new PS estimator for stochastic SDEs to approximate the sensitivities of invariant measures of chaotic ODEs, which is a long-standing problem.
For the invariant measure of a chaotic ODE, the main concept is Sinai-Ruelle-Bowen (SRB) measure. An invariant probability measure $\pi^0$ for a flow $Z_t$ is a physical probability measure if the subset of $z$ satisfying for all continuous function $\varphi$
\[
\lim_{T\rightarrow +\infty} \frac{1}{T} \int_0^T \varphi(Z_t(z))\D t = \pi^0(\varphi).
\]
has positive volume in space. \cite{APPV09} and \cite{TW02} together show that the Lorenz equation has a unique SRB measure. 
For the SRB measure of the system, we are also concerned about the the sensitivity with respect to some parameters of the system. It is shown in \cite{RD97,RD09} that the statistical quantities are differentiable with respect to the small perturbations of parameters for the quasi-hyperbolic systems, for example, the Lorenz system.

However, the sensitivity computations of chaotic systems like the Lorenz system are difficult due to the ill-conditioned initial
value problems.  Lea, Allen \& Haine \cite{LAH00} used the Lorenz equation as an example to illustrate the failure of the adjoint methods as the computed sensitivity blows up exponentially. They also proposed an ensemble-adjoint approach, which uses the average of the computed sensitivities in a intermediate time scale. How to choose the time scale is a hard problem and this approach is unlikely to be stable for more complex system. Wang \cite{WQ13} introduced a shadow operator to the original system and found the shadow trajectory by inverting the shadow operator and then calculated the sensitivity. Further, Wang, Hu \& Blonigan \cite{WHB14} found the shadow trajectory by solving a constrained minimization problem and the corresponding convergence result is shown in \cite{WQ14}. 

Our approach is to estimate the sensitivity of the ODEs by estimating the sensitivity of the SDEs with small volatility $\sigma>0.$ The consideration is that, without the random force, the evolution of the invariant measure of ODEs is governed by the corresponding Liouville equation which may not have a well-behaved steady solution. By adding the random noise, the evolution of the invariant measure of SDEs is governed by the corresponding steady Fokker-Planck equation which has a smooth, well-behaved solution. Thuburn \cite{TJ05} appreciated this point and introduced the small diffusion term and calculated the sensitivities by considering the Fokker-Planck equation, but it required the solution of the high-dimensional elliptic PDE numerically which is highly computational expensive. In addition, as shown in Figure \ref{Convergence Time} later this section, the random noise also accelerates the convergence speed to the equilibrium of the system. Next, we first state some theoretical results about the convergence of the solution and invariant measure from SDEs to ODEs, and then show the numerical results for the ordinary Lorenz equation.  

\subsection{Convergence of SDEs to ODEs}
Freidlin \& Wentzell \cite{FW98} provide a systematic framework for the convergence of SDEs to ODEs. Theorem 1.1 in chapter 2 shows that if $f$ is continuous and the ODE has a unique solution $x_t$, then for sufficiently small $\sigma>0,$ we have
\[
\PP\left( \lim_{\sigma\rightarrow 0} \max_{0\leq t\leq T} \left\|X_t^\sigma - x_t\right\|=0\right)=1,
\]
where $X_t^\sigma$ is the solution to the SDE with volatility $\sigma$ and Theorem 1.2 shows that if the drift is Lipschitz and increases no faster than linearly in $x$, then for all $t>0,$ we have
\[
\EE\left[\left\|X_t^\sigma - x_t\right\|^2\right] \leq a(t)\, \sigma^2,
\]
where $a(t)$ is a monotone increasing function depending on the initial data and the Lipschitz constant.

For the invariant measure, Theorem 4.2 in chapter 6 in \cite{FW98} shows that if the drift $f(x)$ is bounded and uniformly continuous, and the deterministic system has a unique invariant measure $\pi^0,$ then $\pi^\sigma$ converges weakly to $\pi^0$ as $\sigma\rightarrow 0,$ where $\pi^\sigma$ is the invariant measure of the SDE with volatility $\sigma.$



\subsection{Numerical investigation}
Now we investigate the ordinary Lorenz equation:
\begin{equation}
\begin{aligned}
\D x_t
=\begin{pmatrix}
10(x_t^2-x_t^1)\\
x_t^1(\theta-x_t^3) -x_t^2\\
x_t^1x_t^2-\frac{8}{3}x_t^3
\end{pmatrix}
\D t.
\end{aligned}
\end{equation}
We estimate the sensitivity of $\bar{x}_3 = \lim_{T\rightarrow \infty}\frac{1}{T}\int_0^T x_t^3\ \D t$ with respect to $\theta$ at $\theta=28,$ which is approximately $0.981,$ an average of 50 finite difference estimates using different initial conditions shown in \cite{CNW17}.

Theorem 3.1 in chapter IV in \cite{KY88} shows that the invariant measure $\pi^{\sigma}$ of the stochastic Lorenz equation weakly converges to $\pi^0$ as $\sigma\rightarrow 0$ in discrete version, but the convergence speed is not known. Before we numerically investigate the convergence speed, we first notice that the magnitude of the volatility $\sigma$ affects the speed of convergence starting from the same initial point.





\if 0

\fi

First, we plot the expectation $\EE\left[X_T^{3,\sigma}\right]$ with respect to time $T$ for different $\sigma$ from the same initial point $[-2.4, -3.7, 14.98].$ See Figure \ref{Convergence Time}.

\begin{figure}[h]
\center
\subfigure[Evolutions of different $\sigma$]{
\label{Convergence Time}
\includegraphics[width=0.45\textwidth]{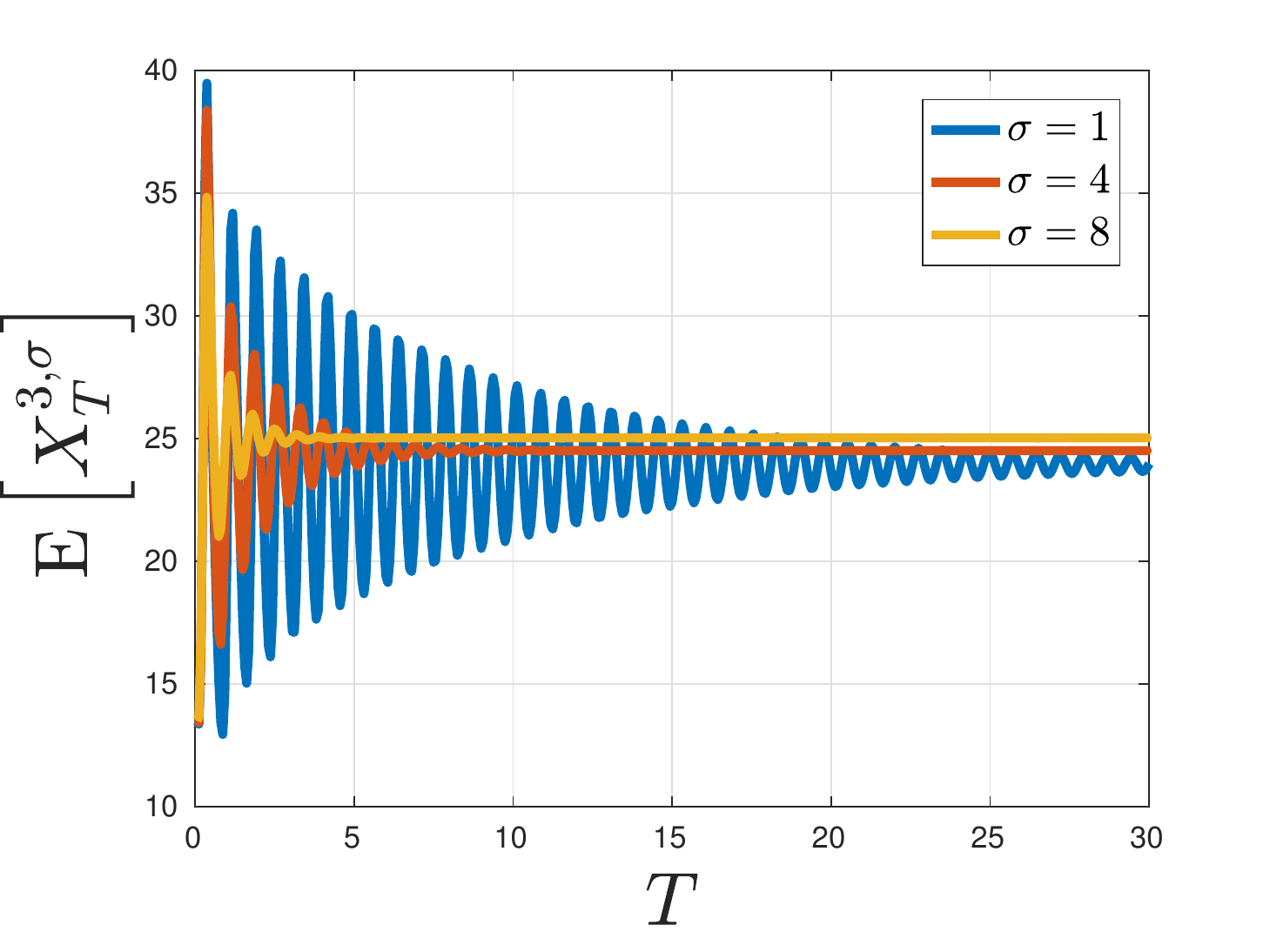}
}
\subfigure[Convergence speed for different $\sigma^2$]{
\label{constant lambda}
\includegraphics[width=0.45\textwidth]{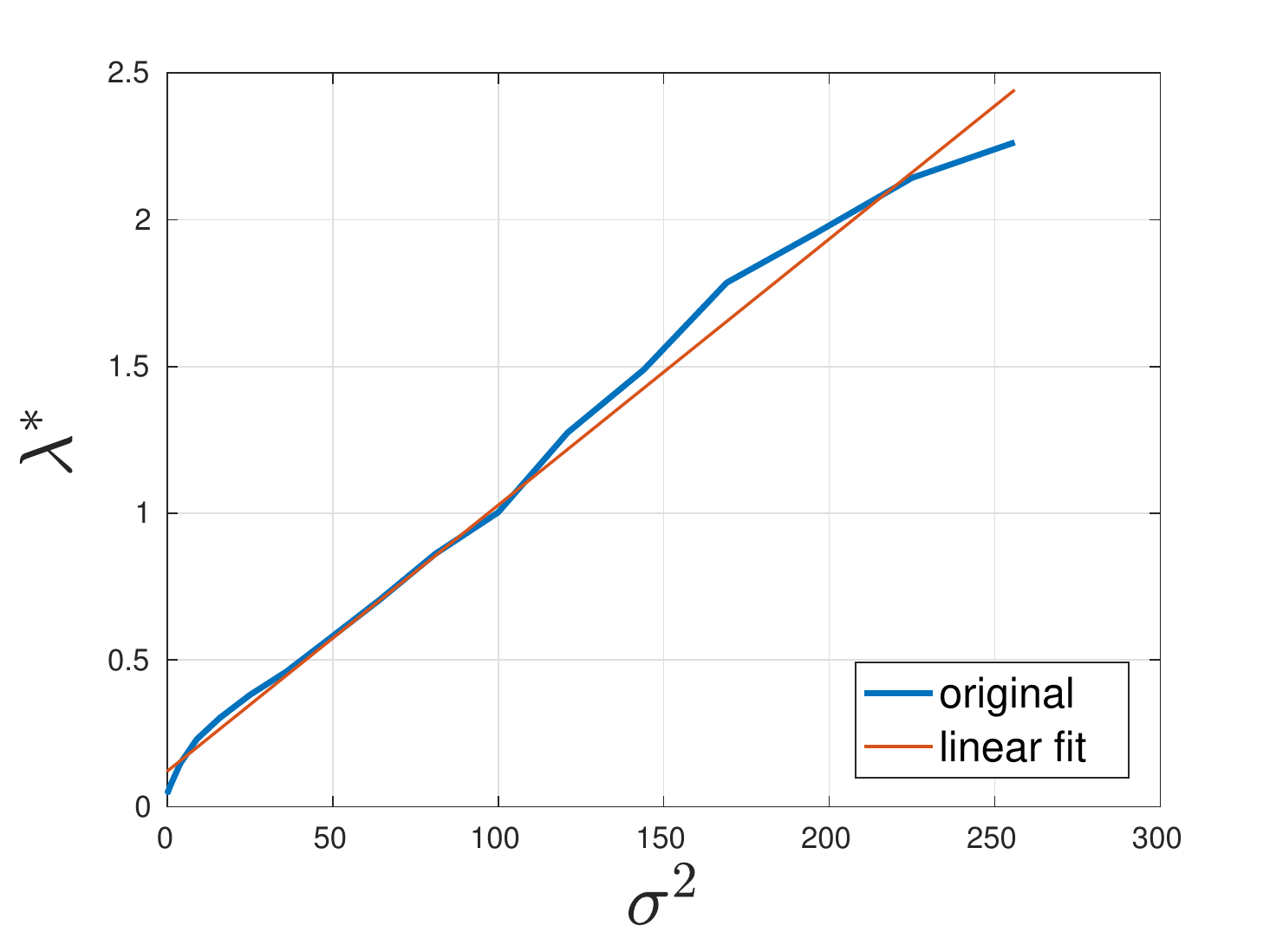}
}
\caption{The effect of $\sigma$ on convergence speed $\lambda^*$}
\label{lambda vs sigma}
\end{figure}


We see that the time period needed to approach equilibrium decreases and the equilibrium value increases as $\sigma$ increases, which indicates that $\sigma$ affects the convergence speed $\lambda^*.$
Next, to quantify this effect, we calculate the error bound using the moving maximum and minimum and then perform a linear regression of log error bound on $T$ to get the estimated convergence speed $\lambda^*.$ We plot the estimated $\lambda^*$ with respect to $\sigma^2$ in Figure \ref{constant lambda}, which shows clearly that the estimated convergence rate $\lambda^*$ is approximately proportional to $\sigma^2$. Therefore, we need to use a longer $T$ to simulate the SDE with a smaller $\sigma$ to achieve the same truncation error in $T$. It also indicates that the convergence speed of the ODE is far slower than the SDE since ODE has $\sigma=0.$


\if 0
\fi


\begin{figure}[H]
\center
\includegraphics[width=\textwidth]{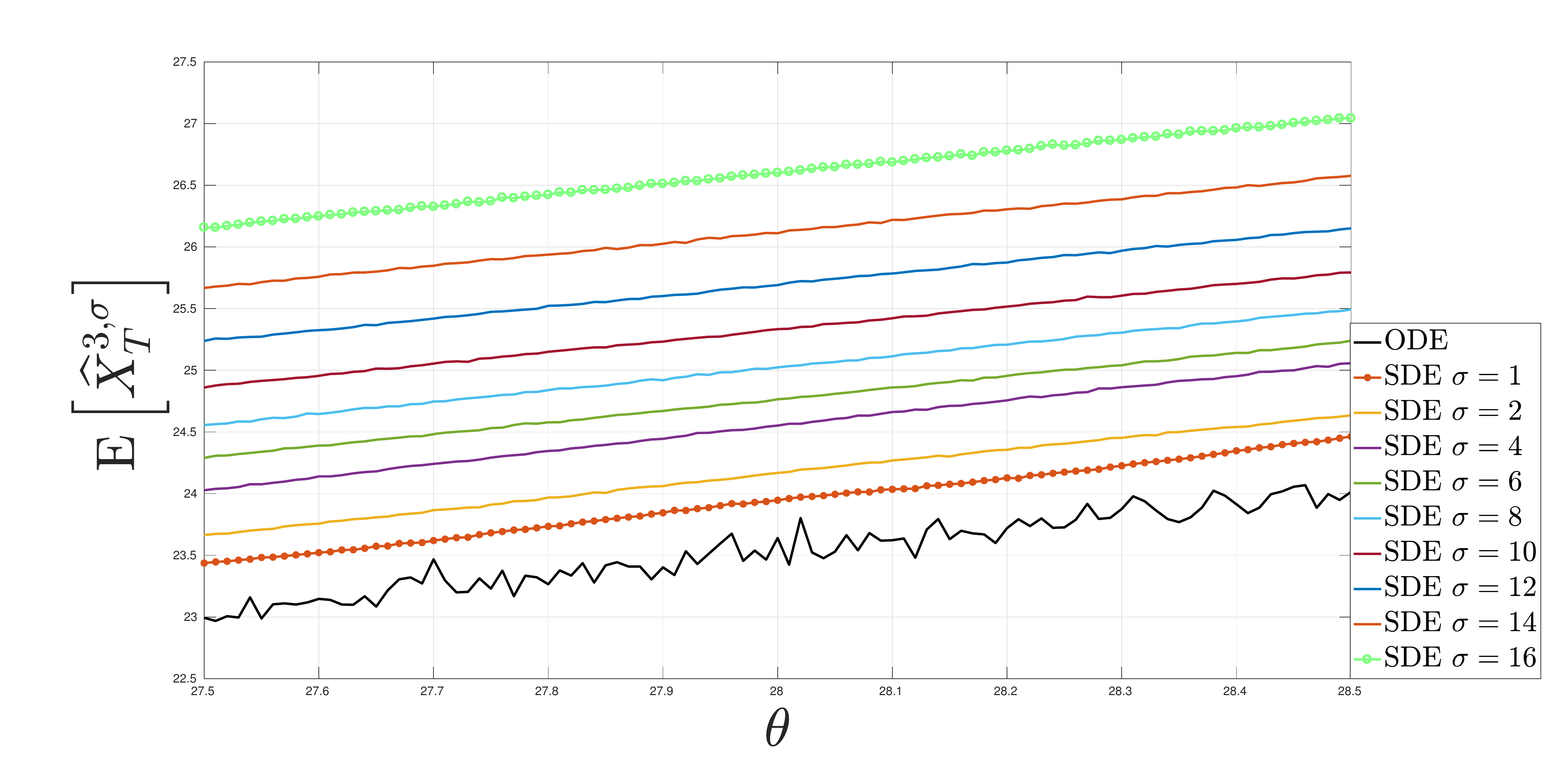}
\caption{ODE and SDEs with different $\sigma$ }
\label{yitong4}
\end{figure}
Next, we investigate the weak convergence order of the invariant measures with respect to $\sigma.$ 
First, we estimate $\bar{x}_3$ for different values of $\theta$ range from $27.50$ to $28.50$ using the $4$th-order Runge-Kutta method with $h=0.001,$ $T=300$ and $x_0 = (-2.4, -3.7, 14.98)$. See the black line in Figure \ref{yitong4}. It oscillates wildly and causes trouble for the finite difference method.

For SDEs, we know from the previous estimates of $\lambda^*$ that we need to choose different times $T$ for different $\sigma$ to ensure that the equilibrium is achieved. Figure \ref{yitong4} shows that, by introducing the small random noise term, the solutions of the SDE are smoother with respect to different $\theta$ and they converge to the solution of the ODE as $\sigma$ decreases.
Simultaneously the sensitivity problem becomes better-posed and the finite difference method already can provide a good estimate of the sensitivity.  

Next, based on Figure \ref{yitong4}, we estimate the weak convergence order with respect to $\sigma,$ see Figure \ref{weak_sigma_3}. It shows first order weak convergence. 


\begin{figure}[H]
\center
\subfigure[Original value ]{
\label{weak_sigma_3}
\includegraphics[width=0.45\textwidth]{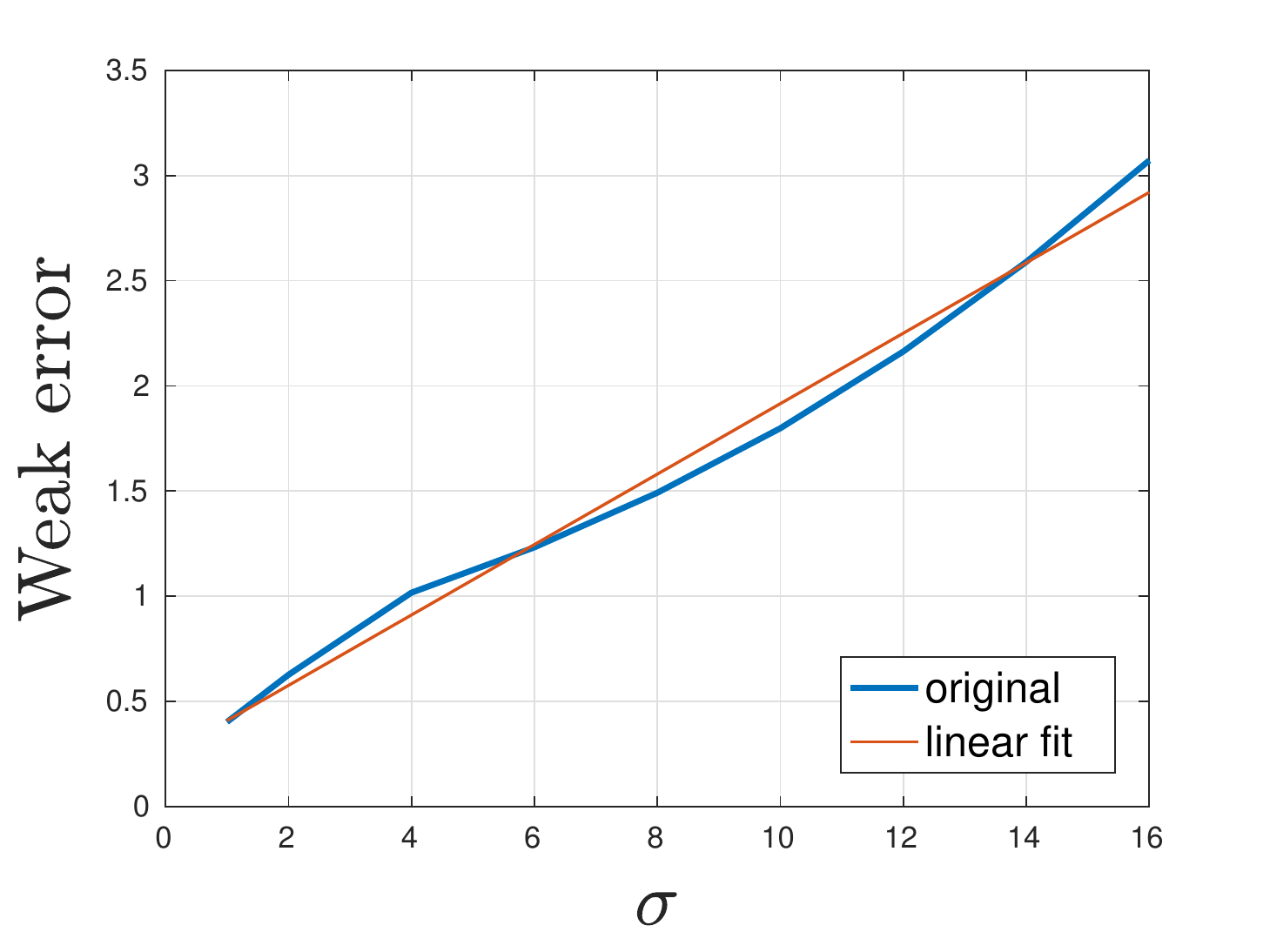}
}
\subfigure[Sensitivity]{
\label{weak_sigma_4}
\includegraphics[width=0.45\textwidth]{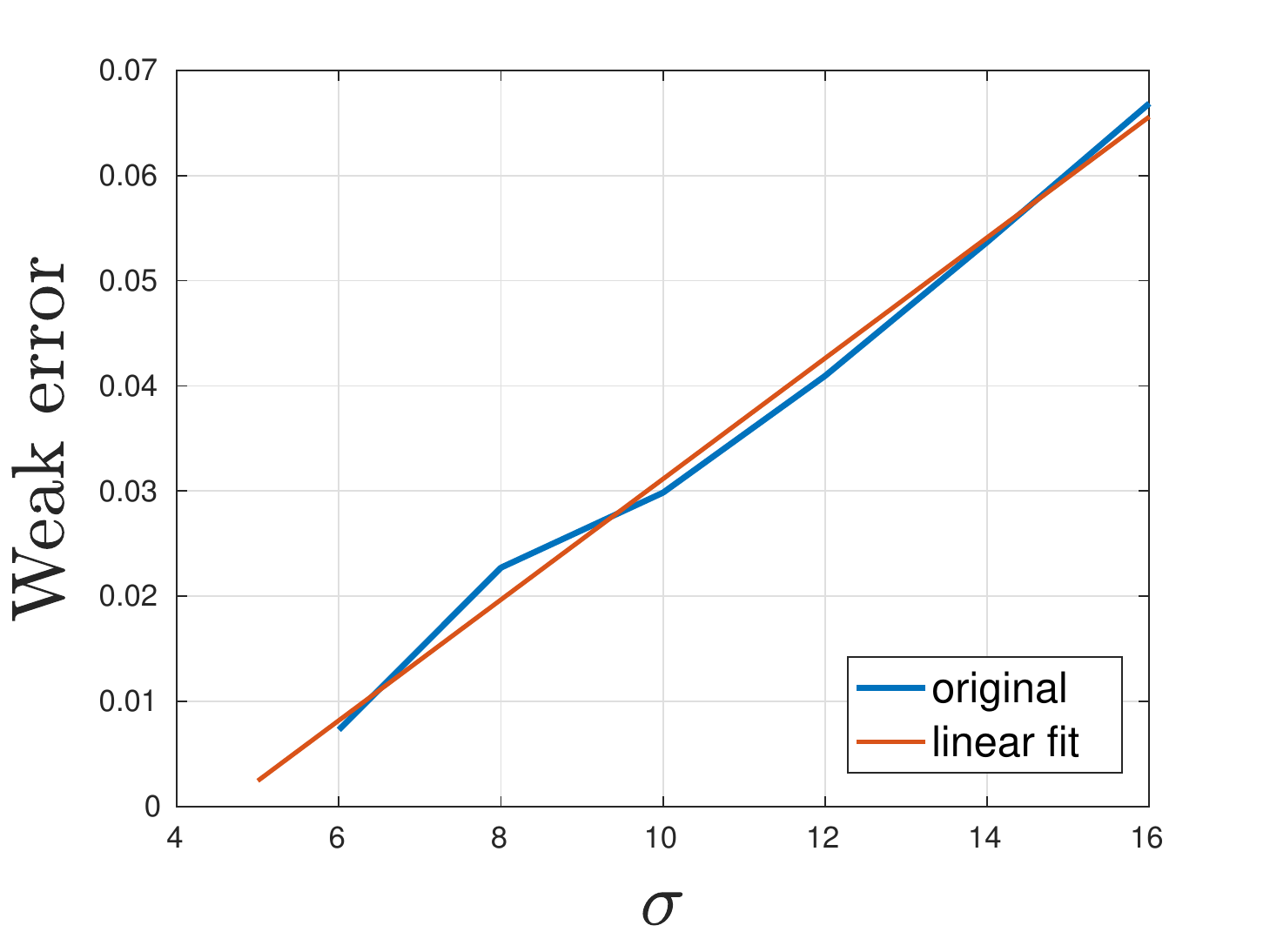}
}
\caption{Weak convergence with respect to $\sigma$}
\end{figure}

Similarly, we plot the evolutions of $\mathrm{E} \left[ \frac{\partial
 \breve{F}}{\partial \theta}(\sigma)\right]$ with respect to $T$ for different $\sigma$ and observe the same results as the original value. Compared with Figure \ref{lambda vs sigma}, the time period needed to achieve the equilibrium of the sensitivities is longer since the estimator is a path integral of the original value.
\begin{figure}[h]
\center
\subfigure[Evolutions of different $\sigma$]{
\label{Convergence Time Sen}
\includegraphics[width=0.45\textwidth]{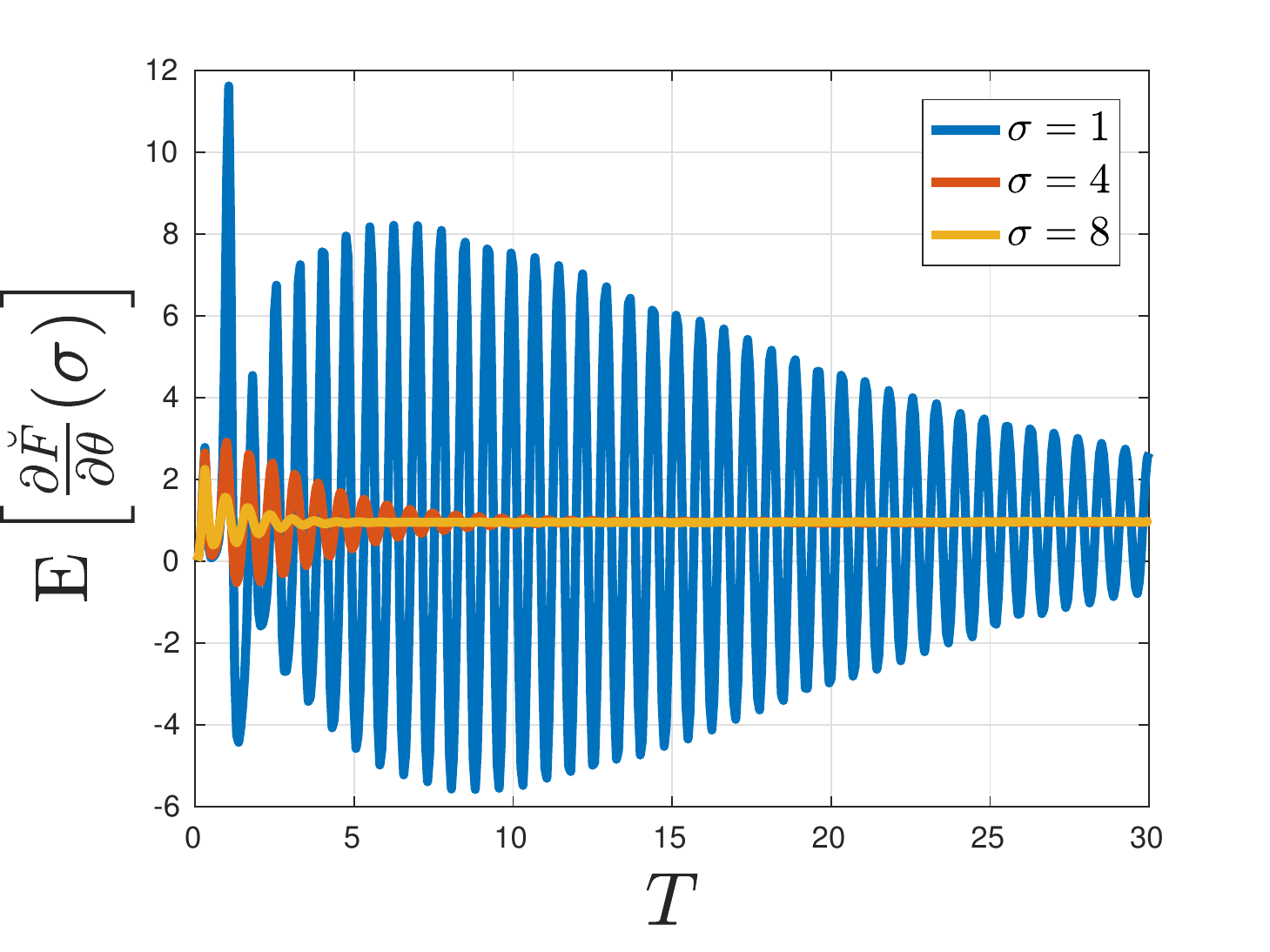}
}
\subfigure[Convergence speed for different $\sigma^2$]{
\label{constant lambda Sen}
\includegraphics[width=0.45\textwidth]{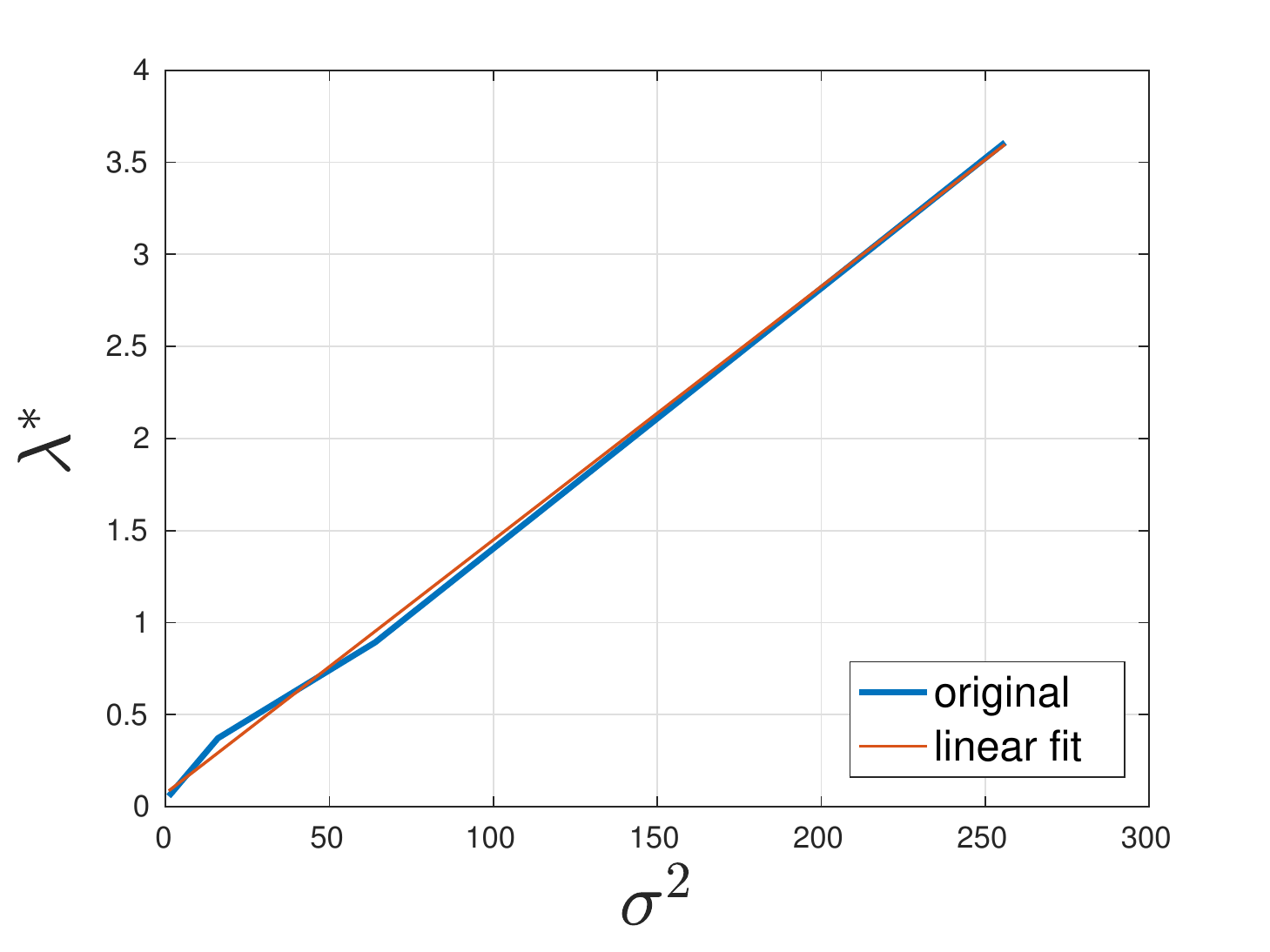}
}
\caption{The effect of $\sigma$ on convergence speed $\lambda^*$}
\label{lambda vs sigma 1 }
\end{figure}

We also plot the sensitivity with respect to $\theta$ for the SDE with different values of $\sigma$ in Figure \ref{yitong6}. The black line shows the sensitivity of the ODE, a constant $0.981$ shown in \cite{CNW17}. For the SDEs, we use the new PS estimator and similarly simulate longer $T$ for smaller $\sigma.$ As shown in Figure \ref{yitong6}, the sensitivities of the SDEs are less smooth with respect to $\theta$ due to the large variance. We plot the average value in the same color and observe that the sensitivity $\mathrm{E} \left[ \frac{\partial
 \breve{F}}{\partial \theta}(\sigma)\right]$ increases as $\sigma$ decreases and converges to the ODE's. The first order weak convergence is shown in Figure \ref{weak_sigma_4}.
\begin{figure}[h]
\center
\includegraphics[width=\textwidth]{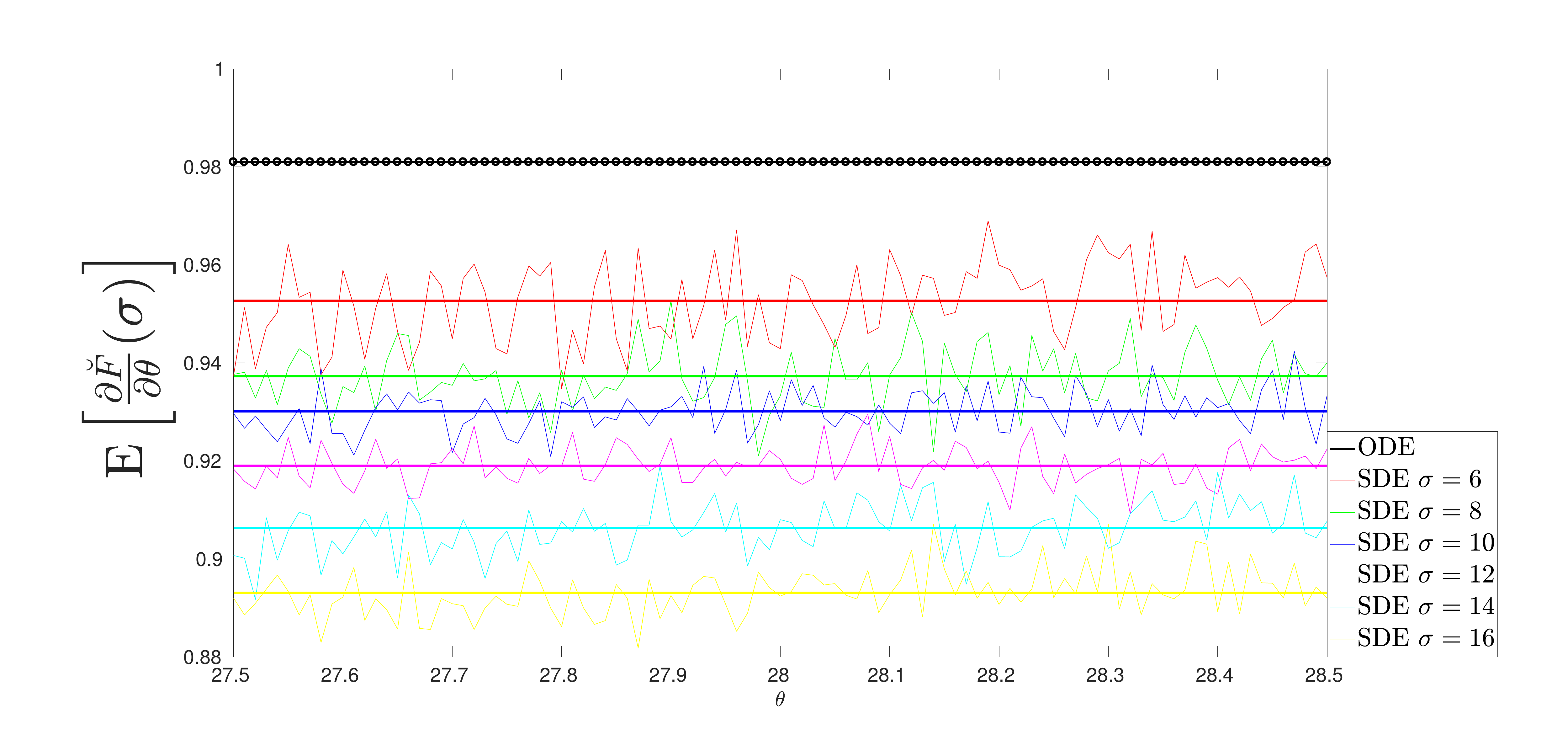}
\caption{Sensitivities of ODE and SDEs with different $\sigma$ }
\label{yitong6}
\end{figure}


Now, we analyze the computational cost first by stating the following numerical results obtained from the previous experiments.
\begin{itemize}
\item The convergence speed $\lambda^*$ is proportional to $\sigma^2$, that is $\lambda^*\sim O(\sigma^2).$  See figure \ref{constant lambda}.
\item The weak convergence order of invariant measure from SDE to ODE with respect to $\sigma$ is 1, see figure \ref{weak_sigma_3} and \ref{weak_sigma_4}.
\item The variance of the new PS estimator linearly increases as $T$ increases see figure \ref{LinearCOMGreek} and as $\sigma^{-2}$ increases by Theorem \ref{Theorem: Importance Sampling}, that is $\mathbb{V}\sim O(T/\sigma^2).$
\end{itemize}

\vspace{1em}

\begin{theorem}[ODE Approximation]
\label{Theorem: ODE}
Suppose the conclusions above hold theoretically, to achieve $O(\varepsilon^2)$ MSE, the optimal complexity of standard Monte Carlo method to approximate the sensitivities of ODEs is $O(\varepsilon^{-9}|\log \varepsilon|^2)$
\end{theorem}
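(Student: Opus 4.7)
The plan is to balance three sources of error so that each contributes $O(\varepsilon^2)$ to the MSE, and then to multiply the resulting number of sample paths by the per-path cost. The three error sources, identified from the bullet points immediately preceding the theorem, are: (i) the bias from replacing the ODE invariant measure sensitivity by the SDE sensitivity at volatility $\sigma$, which is $O(\sigma)$ by the first order weak convergence; (ii) the truncation bias from using a finite horizon $T$ in place of the infinite-time limit, which, combined with $\lambda^*\sim \sigma^2$, is $O(\exp(-c\,\sigma^2 T))$; and (iii) the statistical error of the new PS estimator, whose variance grows like $T/\sigma^2$ by Theorem~\ref{Theorem: Importance Sampling}.

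First I would set $\sigma\sim \varepsilon$ so that the squared weak bias from (i) is $O(\varepsilon^2)$. Next, to make the squared truncation bias (ii) also $O(\varepsilon^2)$, I would require $\sigma^2 T \gtrsim |\log\varepsilon|$, i.e.\ $T\sim |\log\varepsilon|/\sigma^2 \sim \varepsilon^{-2}|\log\varepsilon|$. Plugging these into the variance bound $V\sim T/\sigma^2$ yields $V \sim \varepsilon^{-4}|\log\varepsilon|$, so to drive the Monte Carlo variance below $\varepsilon^2$ we need
\[
N \;\sim\; \frac{V}{\varepsilon^2} \;\sim\; \varepsilon^{-6}|\log\varepsilon|
\]
independent sample paths.

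Finally I would compute the per-path cost. Assumption~\ref{assp:Numerical solution} with a first-order scheme gives weak error $O(h)$, so to keep the discretisation bias below $O(\varepsilon)$ we need $h\sim \varepsilon$, and each path requires $T/h \sim \varepsilon^{-3}|\log\varepsilon|$ timesteps (each of $O(1)$ work for the coupled $(\hY,\hy)$ system and the stochastic integral weight). Multiplying,
\[
\text{Cost} \;\sim\; N\cdot \frac{T}{h} \;\sim\; \bigl(\varepsilon^{-6}|\log\varepsilon|\bigr)\bigl(\varepsilon^{-3}|\log\varepsilon|\bigr) \;=\; \varepsilon^{-9}|\log\varepsilon|^2,
\]
which is the claimed bound.

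The only genuinely delicate step is the interaction in (ii): because the convergence rate $\lambda^*$ of the SDE to its invariant measure degenerates like $\sigma^2$ as $\sigma\to 0$, forcing $\sigma\sim\varepsilon$ for the weak bias simultaneously inflates the required horizon to $T\sim\varepsilon^{-2}|\log\varepsilon|$, and this in turn couples into both the variance factor $T/\sigma^2$ and the number of timesteps $T/h$. Once these dependencies are tracked carefully, the remainder is straightforward arithmetic balancing, and the optimal complexity follows.
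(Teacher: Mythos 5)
Your proposal is correct and follows essentially the same route as the paper: the same three-way error decomposition (discretisation bias $O(h)$, truncation bias $O(\e^{-\lambda^* T})$ with $\lambda^*\sim\sigma^2$, and weak bias $O(\sigma)$), the same scalings $h\sim\varepsilon$, $\sigma\sim\varepsilon$, $T\sim\varepsilon^{-2}|\log\varepsilon|$, and the same variance balancing $T/(\sigma^2 N)\sim\varepsilon^2$ giving $N\sim\varepsilon^{-6}|\log\varepsilon|$ and total cost $O(\varepsilon^{-9}|\log\varepsilon|^2)$. No gaps.
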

\begin{proof}
Denoting the sensitivity of ODE by $\pi'$ and SDE by $\Pi',$ the weak error can be decomposited into 
\begin{eqnarray*}
\left| \EE\left[  \frac{\partial
 \breve{F}}{\partial \theta}(\sigma)\right]-\pi' \right| & = & \left| \EE\left[ \frac{\partial
 \breve{F}}{\partial \theta}(\sigma)- \frac{\partial
 F}{\partial \theta}(\sigma) + \frac{\partial
 F}{\partial \theta}(\sigma) - \Pi' +\Pi'-\pi' \right]\right|\\
&\leq & \left| \EE\left[ \frac{\partial
 \breve{F}}{\partial \theta}(\sigma)- \frac{\partial
 F}{\partial \theta}(\sigma)\right]\right| +  \left| \EE\left[  \frac{\partial
 F}{\partial \theta}(\sigma) - \Pi'  \right]\right| +  \left| \EE\left[ \Pi'-\pi' \right]\right|\\
& = & O(h) + O(e^{-\lambda^* T}) + O(\sigma).
\end{eqnarray*}
Bounding the weak error by $O(\varepsilon),$ we get
\[
h\sim O(\varepsilon),\ \ \sigma\sim O(\varepsilon),
\] 
and
\[
e^{-\lambda^* T} \sim O(\varepsilon) \Rightarrow \lambda^* T \sim O(|\log \varepsilon|) \Rightarrow T\sim O(\varepsilon^{-2}|\log \varepsilon|),
\]
which implies that the computational cost per path is $T/h \sim O(\varepsilon^{-3}|\log \varepsilon|).$

Next, the variance of MC estimator also needs to be bounded by $O(\varepsilon^2)$ which requires the number of paths $N$ satisfies
\[
T/(\sigma^2 N) \sim O(\varepsilon^2) \Rightarrow N \sim O(\varepsilon^{-6} |\log \varepsilon|).
\]
Therefore, the total computational cost of standard Monte Carlo method is $O(\varepsilon^{-9} |\log \varepsilon|^2).$
\end{proof}


For MLMC, we still require $T\sim O(\varepsilon^{-2}|\log \varepsilon|)$ and the timestep in finest level is $O(\varepsilon).$ The key point here is the efficient coupling, that is the choice of the timestep size  $h_0$ on level 0, which requires $C_0 V_0 > C_1 V_1,$ that is, following Theorem \ref{Theorem: Importance Sampling} and \ref{Theorem: MLMC COM},
\[
 V_1 = O(h_0^2\, T^2/\sigma^4)\sim V_0=O(T/\sigma^2) \Rightarrow h_0 \sim O(\sigma/\sqrt{T}) \Rightarrow h_0 \sim O(\varepsilon^2 |\log \varepsilon|^{-1/2}),
\] 
which means the required $h_0$ is far smaller than the size we need and as a result, MLMC with change of measure doesn't work. The main issue here is the small $\sigma$ destroys the variance of the Radon-Nikodym derivatives. Therefore, constructing an estimator with higher order convergence in $\sigma$ to ODE sensitivity is the key, and we deal with that in the next subsection.

\subsection{Improvement by Richardson-Romberg Extrapolation}
In this subsection, we construct new estimators by applying Richardson-Romberg (R-R) extrapolation on $\sigma$ to improve the weak convergence order of $\sigma$ and then reduce the complexity. R-R extrapolation was first introduced in \cite{TT90} and extended to multi-step R-R extrapolation in \cite{PG07} to achieve higher order weak convergence rate, and further extended to multilevel R-R extrapolation in \cite{LP17}.

We assume that under suitable conditions the sensitivity $\Pi'(\sigma)$ of the invariant measure of SDEs with volatility $\sigma$ has the following expansion at $\sigma=0$ with $\Pi'(0)=\pi'$ which is the sensitivity of the ODEs, 
\begin{equation}
\Pi'(\sigma) = \pi' + \sum_{k=1}^{R-1}c_k\, \sigma^k + O(\sigma^{R})
\end{equation}
where the real constants $c_k,$ for $k=1,...,R-1,$ do not depend on $\sigma.$ Following the multi-step R-R extrapolation, and denoting $\tilde{\Pi}'(\sigma) = (\Pi'(\sigma/1),\Pi'(\sigma/2),...,\Pi'(\sigma/R))^T,$ there exists a weight vector $\tilde{w}=(w_1,...,w_R)\in\mathbb{R}^R$ such that
\begin{equation}
\tilde{w}\,\tilde{\Pi}'(\sigma) = \sum_{k=1}^{R} w_k \,\Pi'(\sigma/k) = \pi' + O(\sigma^R),
\end{equation} 
where $w_k = \frac{(-1)^{R-k}k^R}{k! (R-k)!}$ for $k=1,...,R.$ Specifically, we have
\begin{eqnarray*}
R = 1 &  &w_1 = 1;\\
R = 2 &  &w_1 = -1,\ w_2 = 2; \\
R = 3 &  &w_1 = 1/2,\ w_2=-4,\ w_3 =9/2;
\end{eqnarray*}
Similarly, we construct the new estimators with higher order weak convergence rate on $\sigma.$ For the original expectation $F(\theta,\sigma)$, we have
\begin{eqnarray*}
R = 1 &  &F(\theta,\sigma)\\
R = 2 &  &-F(\theta,2\sigma)+2F(\theta,\sigma); \\
R = 3 &  & \frac{1}{2}\, F(\theta,3\sigma)-4F(\theta,1.5\sigma)+\frac{9}{2}\, F(\theta,\sigma);
\end{eqnarray*}
where we use $\sigma$ as the smallest one for each estimator since the estimator with smallest $\sigma$ converges slowest which determines the simulation time $T.$ The construction for the estimators for sensitivities are exactly the same. Numerically, we plot the weak errors for these three estimators with the same smallest $\sigma = 1/4, 1/2, 1,2,...,20$ for Lorenz problem  in Figure \ref{RR_Exp_weak_error} for original expectation and Figure \ref{RR_Der_weak_error} for sensitivity.
\begin{figure}[h]
\center
\subfigure[Original value]{
\label{RR_Exp_weak_error}
\includegraphics[width=0.45\textwidth]{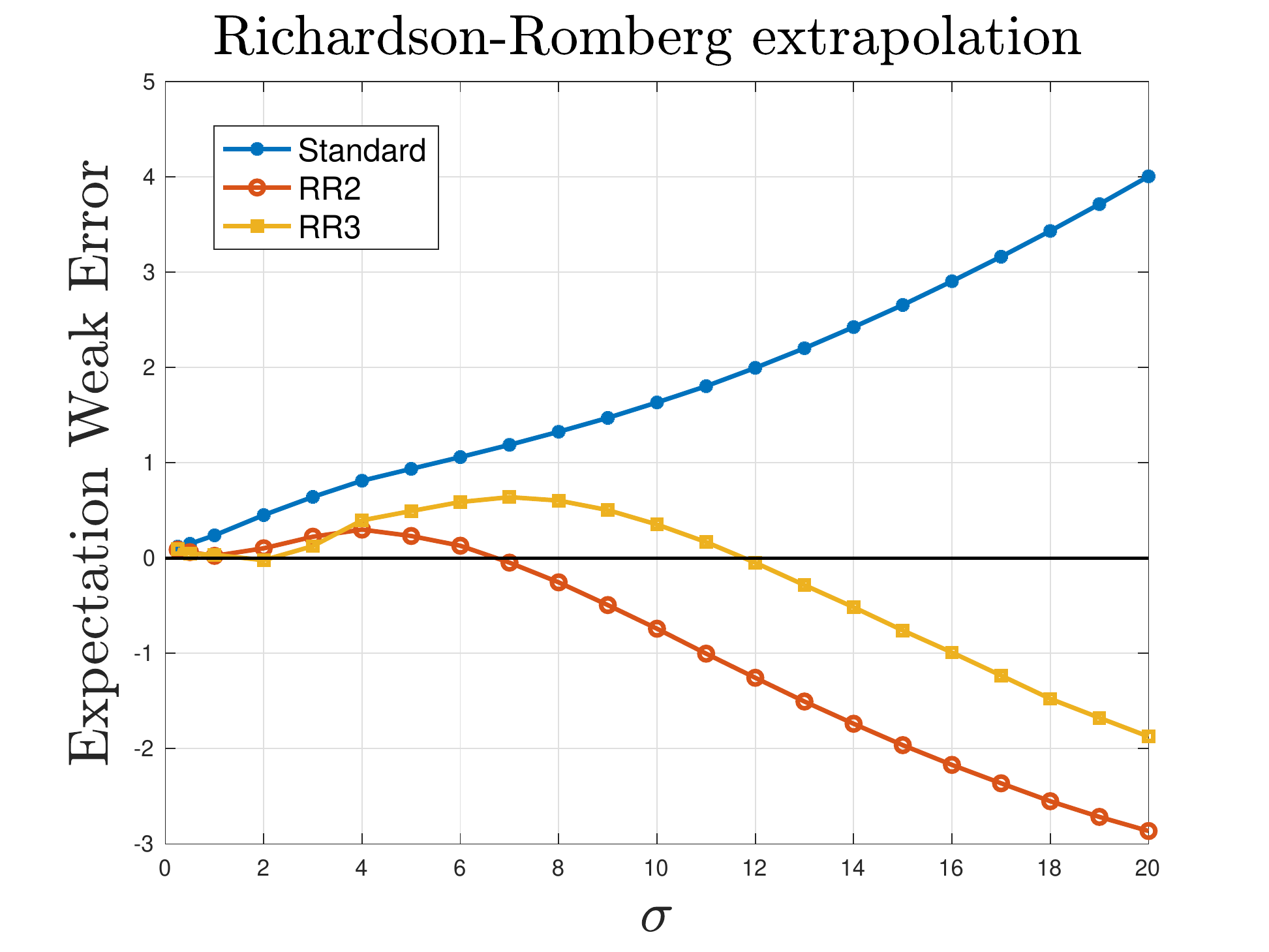}
}
\subfigure[Sensitivity]{
\label{RR_Der_weak_error}
\includegraphics[width=0.45\textwidth]{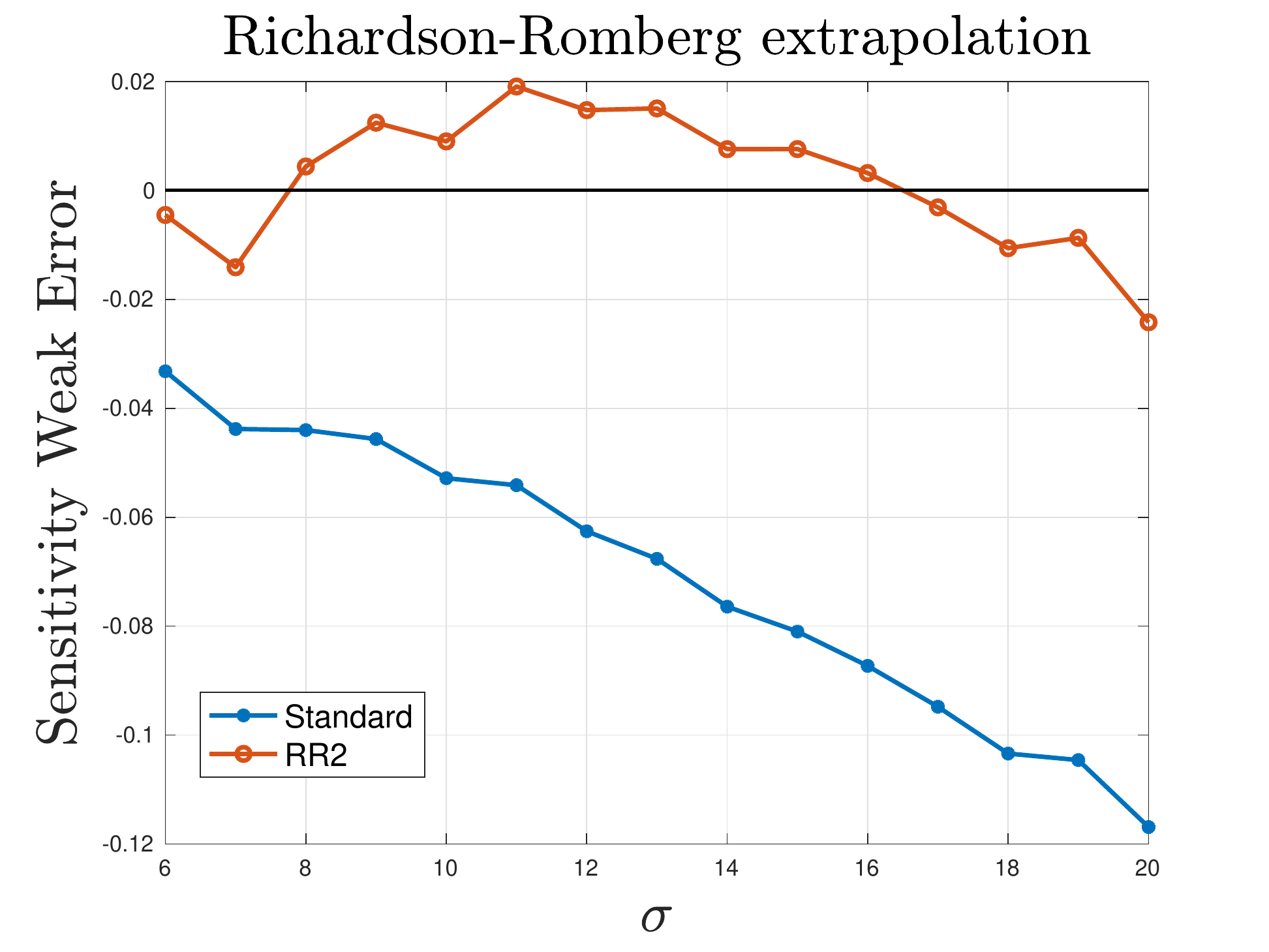}
}
\caption{Weak convergence of different R-R estimators}
\label{RR_summary}
\end{figure}

We see that the R-R extrapolation improve the weak convergence rate significantly. Especially for the sensitivity, the 2nd order R-R estimator already gives an accurate estimate, which may be due to the linearity of the sensitivity on $\sigma.$ The computational cost increases linearly in $R$ since we need to simulate $R$ paths with different $\sigma$ but with the same driving Brownian motion to reduce the variance.

Therefore, following the same approach in Theorem \ref{Theorem: ODE}, for the estimator with order $R$ weak convergence rate on $\sigma,$ to achieve $\varepsilon^2$ MSE, we only need $\sigma\sim O(\varepsilon^{1/R})$ and $T\sim O(\varepsilon^{-2/R}|\log\varepsilon|),$ and reduce the computational cost of standard Monte Carlo method to $O(R\varepsilon^{-3-6/R}|\log\varepsilon|^2).$ The optimal $R$ to minimize the cost is $\ceil{6|\log\varepsilon|}$ which gives a total cost $O(\varepsilon^{-3}|\log\varepsilon|^3).$  

Similarly, for the MLMC with change of measure, to achieve a good coupling, we require $h_0\sim O(\varepsilon^{2/R}|\log\varepsilon|^{-1/2}),$ and the optimal computational cost is $O(\varepsilon^{-2-8/R}|\log\varepsilon|^3).$ The optimal $R$ to minimize the cost is $\ceil{8|\log\varepsilon|}$ which gives a total cost $O(\varepsilon^{-2}|\log\varepsilon|^4).$ 

For the Lorenz problem, Figure \ref{RR_Der_weak_error} shows the 2nd order R-R estimator with $\sigma=15$ is accurate enough. The convergence rate $\lambda^*$ is much larger and we only need to simulate the SDEs to time $T=2$ which is already sufficiently large to achieve equilibrium. Another benefit is that the $h_0$ used on level 0 is much larger. Compared with Figure \ref{MLMC-PS-sum}, the new R-R estimator has a much better accuracy and much smaller computational cost. The following table gives the details of the comparison. Note that the standard estimator still have a large bias due to $\sigma$ and reducing $\sigma$ sufficiently small to achieve a higher accuracy becomes computational infeasible within a reasonable time period, while the 2nd order R-R estimator has achieved a good accuracy with only $1.1\%$ computational cost.

\begin{table}[H]
\centering
\caption{Computation cost comparison }
\label{my-label}
\begin{tabular}{c|c|c|c|c|c}
&$\sigma$ & $T$ & $h_0$ & Computational cost  & Estimated value \\ \hline
Standard &6 & 10 & $2^{-7}$ & $2.450e+13$  & 0.93264\\ \hline
RR2 &15 & 2 & $2^{-4}$ & $2.668e+11$  & 0.97793
\end{tabular}
\end{table}

\section{Conclusions and future work}
In this paper, we proposed a new pathwise sensitivity estimator using importance sampling for the sensitivities of chaotic SDEs. Both the new estimator and the Malliavin estimator perform well and are extended to MLMC successfully. The benefit of the new pathwise estimator is that fundamentally changes the variation process and is easily extended to the sensitivity with respect to initial value and volatility parameters. In addition, we also consider using this estimator for an SDE with small volatility to approximate the sensitivity of an ODE. Together with the Richardson-Romberg extrapolation, we construct a new efficient estimator overcoming the effect of the small volatility.


\bibliographystyle{siam}
\bibliography{citation}

\begin{thebibliography}{10}

\bibitem{APPV09}
{\sc V.~Araujo, M.~Pacifico, E.~Pujals, and M.~Viana}, {\em Singular-hyperbolic
  attractors are chaotic}, Transactions of the American Mathematical Society,
  361 (2009), pp.~2431--2485.

\bibitem{AJLR18}
{\sc R.~Assaraf, B.~Jourdain, T.~Leli{\`e}vre, and R.~Roux}, {\em Computation
  of sensitivities for the invariant measure of a parameter dependent
  diffusion}, Stochastics and Partial Differential Equations: Analysis and
  Computations, 6 (2018), pp.~125--183.

\bibitem{CJ13}
{\sc J.~H. Chan and M.~Joshi}, {\em Fast {M}onte {C}arlo {G}reeks for financial
  products with discontinuous pay-offs}, Mathematical Finance, 23 (2013),
  pp.~459--495.

\bibitem{CNW17}
{\sc M.~Chater, A.~Ni, and Q.~Wang}, {\em Simplified least squares shadowing
  sensitivity analysis for chaotic {ODE}s and {PDE}s}, Journal of Computational
  Physics, 329 (2017), pp.~126--140.

\bibitem{CG07}
{\sc N.~Chen and P.~Glasserman}, {\em Malliavin greeks without {M}alliavin
  calculus}, Stochastic Processes and their Applications, 117 (2007),
  pp.~1689--1723.

\bibitem{Part3}
{\sc W.~Fang and M.~Giles}, {\em Multilevel {M}onte {C}arlo method for ergodic
  {SDE}s without contractivity}, arXiv preprint arXiv:1803.05932,  (2018).

\bibitem{Part1}
{\sc W.~Fang and M.~B. Giles}, {\em Adaptive {E}uler-{M}aruyama method for
  {SDE}s with non-globally {L}ipschitz drift: {P}art {I}, finite time
  interval}, arXiv preprint arXiv:1609.08101,  (2016).

\bibitem{Part2}
\leavevmode\vrule height 2pt depth -1.6pt width 23pt, {\em Adaptive
  {E}uler-{M}aruyama method for {SDE}s with non-globally {L}ipschitz drift:
  {P}art {II}, infinite time interval}, arXiv preprint arXiv:1703.06743,
  (2017).

\bibitem{FLLLT99}
{\sc E.~Fourni{\'e}, J.~Lasry, J.~Lebuchoux, P.~Lions, and N.~Touzi}, {\em
  Applications of {M}alliavin calculus to {M}onte {C}arlo methods in finance},
  Finance and Stochastics, 3 (1999), pp.~391--412.

\bibitem{FW98}
{\sc M.~Freidlin and A.~Wentzell}, {\em Random perturbations}, in Random
  Perturbations of Dynamical Systems, Springer, 1998, pp.~15--43.

\bibitem{GB09}
{\sc M.~Giles}, {\em Vibrato {M}onte {C}arlo sensitivities}, in Monte Carlo and
  Quasi-Monte Carlo Methods 2008, Springer, 2009, pp.~369--382.

\bibitem{GG06}
{\sc M.~Giles and P.~Glasserman}, {\em Smoking adjoints: Fast {M}onte {C}arlo
  greeks}, Risk, 19 (2006), pp.~88--92.

\bibitem{giles08}
{\sc M.~B. Giles}, {\em Multilevel {M}onte {C}arlo path simulation}, Operations
  Research, 56 (2008), pp.~607--617.

\bibitem{giles15}
\leavevmode\vrule height 2pt depth -1.6pt width 23pt, {\em Multilevel {M}onte
  {C}arlo methods}, Acta Numerica, 24 (2015), p.~259.

\bibitem{GP13}
{\sc P.~Glasserman}, {\em Monte {C}arlo methods in financial engineering},
  vol.~53, Springer Science \& Business Media, 2013.

\bibitem{GP87}
{\sc P.~Glynn}, {\em Likelilood ratio gradient estimation: an overview}, in
  Proceedings of the 19th conference on Winter simulation, ACM, 1987,
  pp.~366--375.

\bibitem{KY88}
{\sc Y.~Kifer}, {\em Random perturbations of dynamical systems}, Nonlinear
  Problems in Future Particle Accelerators,  (1988), p.~189.

\bibitem{LAH00}
{\sc D.~J. Lea, M.~R. Allen, and T.~W.~N. Haine}, {\em Sensitivity analysis of
  the climate of a chaotic system}, Tellus A: Dynamic Meteorology and
  Oceanography, 52 (2000), pp.~523--532.

\bibitem{LP90}
{\sc P.~L'Ecuyer}, {\em A unified view of the {IPA}, {SF}, and {LR} gradient
  estimation techniques}, Management Science, 36 (1990), pp.~1364--1383.

\bibitem{LP17}
{\sc V.~Lemaire and G.~Pag{\`e}s}, {\em Multilevel {R}ichardson--{R}omberg
  extrapolation}, Bernoulli, 23 (2017), pp.~2643--2692.

\bibitem{PG07}
{\sc G.~Pag{\`e}s}, {\em Multi-step {R}ichardson-{R}omberg extrapolation:
  remarks on variance control and complexity}, Monte Carlo Methods and
  Applications mcma, 13 (2007), pp.~37--70.

\bibitem{PG88}
{\sc G.~Pflug}, {\em Derivatives of probability measures-concepts and
  applications to the optimization of stochastic systems}, in Discrete Event
  Systems: Models and Applications, Springer, 1988, pp.~252--274.

\bibitem{PP05}
{\sc P.~Protter}, {\em Stochastic Integration and Differential equations},
  Springer-Verlag, Berlin, 2004.

\bibitem{RD97}
{\sc D.~Ruelle}, {\em Differentiation of {SRB} states}, Communications in
  Mathematical Physics, 187 (1997), pp.~227--241.

\bibitem{RD09}
\leavevmode\vrule height 2pt depth -1.6pt width 23pt, {\em A review of linear
  response theory for general differentiable dynamical systems}, Nonlinearity,
  22 (2009), p.~855.

\bibitem{TT90}
{\sc D.~Talay and L.~Tubaro}, {\em Expansion of the global error for numerical
  schemes solving stochastic differential equations}, Stochastic analysis and
  applications, 8 (1990), pp.~483--509.

\bibitem{TJ05}
{\sc J.~Thuburn}, {\em Climate sensitivities via a {F}okker--{P}lanck adjoint
  approach}, Quarterly Journal of the Royal Meteorological Society, 131 (2005),
  pp.~73--92.

\bibitem{TL16}
{\sc S.~Tong and G.~Liu}, {\em Importance sampling for option {G}reeks with
  discontinuous payoffs}, INFORMS Journal on Computing, 28 (2016),
  pp.~223--235.

\bibitem{TW02}
{\sc W.~Tucker}, {\em A rigorous {ODE} solver and {S}male’s 14th problem},
  Foundations of Computational Mathematics, 2 (2002), pp.~53--117.

\bibitem{WQ13}
{\sc Q.~Wang}, {\em Forward and adjoint sensitivity computation of chaotic
  dynamical systems}, Journal of Computational Physics, 235 (2013), pp.~1--13.

\bibitem{WQ14}
\leavevmode\vrule height 2pt depth -1.6pt width 23pt, {\em Convergence of the
  least squares shadowing method for computing derivative of ergodic averages},
  SIAM Journal on Numerical Analysis, 52 (2014), pp.~156--170.

\bibitem{WHB14}
{\sc Q.~Wang, R.~Hu, and P.~Blonigan}, {\em Least squares shadowing sensitivity
  analysis of chaotic limit cycle oscillations}, Journal of Computational
  Physics, 267 (2014), pp.~210--224.

\end{thebibliography}

\vspace{2em}
\begin{appendix}

\end{appendix}

\end{document}